\providecommand{\U}[1]{\protect\rule{.1in}{.1in}}
\newtheorem{theorem}{Theorem}
\newtheorem{corollary}[theorem]{Corollary}
\newtheorem{definition}[theorem]{Definition}
\newtheorem{lemma}[theorem]{Lemma}
\newtheorem{problem}[theorem]{Problem}
\newtheorem{proposition}[theorem]{Proposition}
\renewenvironment{proof}[1][Proof]{\noindent\textbf{#1.} }{\ \rule{0.5em}{0.5em}}
\theoremstyle{definition}
\newtheorem*{remark}{Remark}
\theoremstyle{empty}
\title{Uniform rank gradient, cost and local-global convergence }
\author{Mikl\'{o}s Ab\'{e}rt and L\'{a}szl\'{o} M\'{a}rton T\'{o}th\footnote{The authors were supported by the Hungarian National Research, Development and Innovation Office, NKFIH grant K109684 and the ERC Consolidator Grant 648017.}}
\begin{document}

\maketitle

\begin{abstract}
We analyze the rank gradient of finitely generated groups with respect to
sequences of subgroups of finite index that do not necessarily form a chain,
by connecting it to the cost of p.m.p.\ actions. We generalize several results
that were only known for chains before. The connection is made by the notion
of local-global convergence.

In particular, we show that for a finitely generated group $\Gamma$ with fixed
price $c$, every Farber sequence has rank gradient $c-1$. By adapting
Lackenby's trichotomy theorem to this setting, we also show that in a finitely
presented amenable group, every sequence of subgroups with index tending to
infinity has vanishing rank gradient.

\end{abstract}

\section{Introduction}

For a finitely generated group $\Gamma$ let $d(\Gamma)$ denote the minimal
number of generators (or rank) of $\Gamma$. For a subgroup $H\leq\Gamma$ of
finite index let
\[
r(\Gamma,H)=(d(H)-1)/\left\vert \Gamma:H\right\vert .
\]
The \emph{rank gradient} of $\Gamma$ with respect to a sequence $(\Gamma_{n})$
of finite index subgroups is defined to be
\[
\mathrm{RG}(\Gamma,(\Gamma_{n}))=\lim_{n\rightarrow\infty}r(\Gamma,\Gamma
_{n})
\]
when this limit exists. This notion has been introduced by Lackenby
\cite{lack} and further investigated in the literature, mainly for
\emph{chains} of subgroups. Recall that a chain in $\Gamma$ is a decreasing
sequence $\Gamma=\Gamma_{0}>\Gamma_{1}>\ldots$ of subgroups of finite index in
$\Gamma$. In this case, it is easy to see that $r(\Gamma,\Gamma_{n})$ is
non-increasing and so the limit exists.

The main goal of this paper is to offer a general framework for understanding
the rank gradient of an arbitrary sequence of subgroups in $\Gamma$ using the
cost of probability measure preserving (p.m.p.)\ actions of $\Gamma$. For chains this has been done by the first
author and Nikolov in \cite{miknik}. In \cite{torsion} arbitrary sequences
were analyzed for a special class of groups called right angled groups.

Let $(G_{n})$ be a sequence of finite graphs with an absolute degree bound. We
define the \emph{edge density} as
\[
e(G_{n})=\lim_{n\rightarrow\infty}\frac{\left\vert E(G_{n})\right\vert
}{\left\vert V(G_{n})\right\vert }%
\]
when this limit exists and the \emph{lower edge density} \underline{$e$} to be
the lim inf of the same sequence. A \emph{rewiring} of $(G_{n})$ is another
sequence of graphs $H_{n}$ on the same vertex set as of $G_{n}$, such that the
bi-Lipshitz distortion of the maps $\mathrm{id}_{V(G_{n})}$ stay bounded in $n$.
The \emph{combinatorial cost} $\mathrm{cc}(G_{n})$ is defined as the infimum
of the lower edge densities of possible rewirings of $(G_{n})$. This notion has
been introduced by Elek \cite{elek} as a discrete analogue of the notion of cost.

Our first result shows that actually combinatorial cost is more than an
analogue and, when making an additional convergence assumption, it can be
expressed as the cost of a limiting graphing.

Local-global convergence of graphs has been introduced by Bollob\'{a}s and
Riordan \cite{bollriordan} under the name partition metric, while the limiting
object and most of the known results were obtained by Hatami, Lov\'{a}sz and
Szegedy \cite{halosze}. On the group theory side, the notion is related to the
work of Kechris on weak containment, see \cite{kechris} and \cite{abelekprof}.
We postpone its definition to Section \ref{prelimsection}. For now, it
suffices to know that every sequence has a convergent subsequence and that the
limit is a graphing in the sense of Gaboriau \cite{gabor}.

\begin{theorem}
\label{fotetel}Let $G_{n}$ be a local-global convergent graph sequence. Then
we have
\[
\mathrm{cost}(\lim G_{n})=\mathrm{cc}(G_{n}).
\]
Moreover, one can choose rewirings such that the limit defining the edge density exists.
\end{theorem}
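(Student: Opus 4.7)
The plan is to prove the inequality in both directions, with the aid of the graphing $\mathcal{G} := \lim G_n$.

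\textbf{The direction $\mathrm{cost}(\mathcal{G}) \leq \mathrm{cc}(G_n)$.} Given any rewiring $(H_n)$ of $(G_n)$, I would first use compactness of local-global convergence to pass to a subsequence along which both $G_n$ and $H_n$ converge, necessarily to graphings $\mathcal{G}$ and $\mathcal{H}$ respectively, on the same underlying probability space (this is easiest to arrange by local-global convergence of the superposition $G_n \cup H_n$ as an edge-colored graph). The bounded bi-Lipschitz distortion of the identity maps passes to the limit: every edge of $\mathcal{H}$ connects two points lying in the same $\mathcal{G}$-orbit, and vice versa, so $\mathcal{H}$ and $\mathcal{G}$ generate the same p.m.p.\ orbit equivalence relation. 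Hence $\mathrm{cost}(\mathcal{G}) \leq \mu(\mathcal{H}) = \lim e(H_n)$ along the subsequence, which is at most $\underline{e}(H_n)$. Taking the infimum over rewirings yields the bound.

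\textbf{The direction $\mathrm{cc}(G_n) \leq \mathrm{cost}(\mathcal{G})$.} Fix $\varepsilon > 0$ and pick a generating graphing $\mathcal{H}$ of the equivalence relation of $\mathcal{G}$ with $\mu(\mathcal{H}) < \mathrm{cost}(\mathcal{G}) + \varepsilon$. I would first approximate $\mathcal{H}$ from above by a graphing $\mathcal{H}'$ whose edges connect points at $\mathcal{G}$-distance at most some uniform $R$: each partial isomorphism of $\mathcal{H}$ decomposes into pieces on which the image lies within a given $\mathcal{G}$-radius, and the mass moved to infinity is negligible. The resulting $\mathcal{H}'$ is locally determined by $R$-neighborhoods in $\mathcal{G}$, hence encoded by a measurable coloring of finite rooted $\mathcal{G}$-balls. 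Local-global convergence $G_n \to \mathcal{G}$ lets me transfer this coloring back to $G_n$: for $n$ large enough, there are combinatorial rewirings $H_n$ of $G_n$ whose edges are prescribed by the same local rule, and which therefore have $R$-bounded distortion with $G_n$ (so they are genuine rewirings) and edge density converging to $\mu(\mathcal{H}') \leq \mu(\mathcal{H}) + \varepsilon$. Letting $\varepsilon \to 0$ and diagonalizing gives the bound, and simultaneously provides rewirings along which the edge density has a genuine limit, proving the \textit{moreover} clause.

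\textbf{Main obstacle.} The hard step is the transfer in the second direction. One must approximate an abstract graphing by one of bounded range in a way that is compatible with local-global sampling, and then realize the latter combinatorially inside $G_n$ without destroying the bi-Lipschitz bound. The bounded-range reduction is what makes the graphing an essentially local object in $\mathcal{G}$, and thus visible to local-global convergence; without it, the approximation by finite rewirings could introduce long-range edges and inflate the distortion. Making this precise — in particular handling the measure-zero mismatches between the combinatorial and measure-theoretic pictures — is where the core work lies.
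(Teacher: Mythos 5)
Your overall architecture (two inequalities, transfer of local data via local-global convergence) matches the paper, but both directions as written have genuine gaps. In the direction $\mathrm{cost}(\mathcal{G})\leq\mathrm{cc}(G_n)$, the superposition trick produces a limit pair $(\mathcal{G}',\mathcal{H}')$ in which, as you say, the two graphings span the same components. But $\mathcal{G}'$ is only local-global \emph{equivalent} to the given limit $\mathcal{G}$ -- limits are defined only up to this equivalence, and different rewirings or subsequences may produce genuinely different representatives. So you have bounded $\mathrm{cost}(\mathcal{G}')$ for \emph{some} limit depending on $(H_n)$, not $\mathrm{cost}(\mathcal{G})$ for the given one; to conclude you would need to know that local-global equivalent graphings have equal cost, which is essentially part of what is being proved. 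The paper avoids this by transferring the rewiring onto the fixed $\mathcal{G}$ itself: each vertex of $G_n$ is colored by its \emph{type} (its $R$-ball, an auxiliary symmetry-breaking coloring, and the set of $H_n$-edges inside the ball), this coloring is modeled on $\mathcal{G}$ with small error, and the edges are decoded there. (Also, as written your chain ends with ``$\lim e(H_n)$ along the subsequence, which is at most $\underline{e}((H_n))$'' -- a subsequential limit is at \emph{least} the liminf; you must first pass to a subsequence realizing $\underline{e}$.)

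In the direction $\mathrm{cc}(G_n)\leq\mathrm{cost}(\mathcal{G})$ three points need repair. First, the bounded-range reduction you sketch (cutting each partial isomorphism and discarding the long-range mass) deletes a positive-measure set of edges and can therefore fail to generate the relation on a positive-measure set; what is actually needed is Gaboriau's Lemma \ref{lemma:bilipschitz}, which produces a cheap generating graphing that is $L$-bi-Lipschitz equivalent to $\mathcal{G}$, i.e.\ the generation is witnessed within radius $L$ at (almost) every point in both directions. Second, ``edges prescribed by the same local rule'' is not yet well defined: an $R$-ball may have nontrivial automorphisms, so a coloring recording ``there is a distinguished edge from the root to vertex $v$ of the recorded ball'' cannot be decoded unambiguously. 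The paper introduces a proper coloring $\eta$ that is injective on $2R$-balls precisely to rigidify the neighborhoods before encoding. Third, after modeling the coloring on $G_n$ there remains a set of vertices of small but positive density where the recorded and actual neighborhoods disagree or where decoding fails; since a rewiring must satisfy the uniform bi-Lipschitz bound at \emph{every} vertex, one must add back all $G_n$-edges incident to these bad vertices (this costs only $O(\delta D)$ in edge density). These are not ``measure-zero mismatches'' but the positive-measure error sets where the core of the argument lives; your proposal correctly identifies the bounded-range reduction as the key obstacle but does not supply the mechanism that makes the decoded graph both generating and uniformly bi-Lipschitz.
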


Note that similar results to Theorem \ref{fotetel} and its consequences (until Corollary \ref{cor:rankgradzero}) have been obtained independently by A. Carderi, D. Gaboriau and M. de la Salle \cite[2017]{CGS}.

This result can be effectively used to give direct proofs of results on
combinatorial cost using the already established theory of cost. For instance, Theorem \ref{fotetel}
immediately implies the following theorem of Elek \cite{elek}.

\begin{corollary}
\label{treeing}Let $G_{n}$ be a graph sequence with girth tending to infinity
such that $e(G_{n})$ exists. Then we have
\[
\mathrm{cc}(G_{n})=e(G_{n})\text{.}%
\]

\end{corollary}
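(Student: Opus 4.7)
The plan is to combine Theorem~\ref{fotetel} with Gaboriau's theorem that the cost of an equivalence relation generated by a treeing equals the edge measure of that treeing. The upper bound $\mathrm{cc}(G_n)\leq e(G_n)$ is immediate from the trivial rewiring $H_n=G_n$, so the content lies in establishing the reverse inequality $\mathrm{cc}(G_n)\geq e(G_n)$.

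To prove this, I would fix an arbitrary rewiring $(H_n)$ of $(G_n)$ and show $\underline{e}(H_n)\geq e(G_n)$. First choose a subsequence $n_k$ along which $e(H_{n_k})$ converges to $\underline{e}(H_n)$. By compactness of local-global convergence on bounded-degree graphs, pass to a further subsequence along which $(G_{n_k})$ is local-global convergent, with limit graphing $\mathcal{G}$.

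The crucial structural input is that since $\mathrm{girth}(G_n)\to\infty$, the limit $\mathcal{G}$ is a treeing: the presence of a short cycle through the root is a local observable (hence certainly a local-global observable) whose frequency in $G_{n_k}$ tends to zero, so $\mathcal{G}$ is almost surely acyclic. By Gaboriau's theorem, the cost of the equivalence relation generated by the treeing $\mathcal{G}$ equals its edge measure $e_\mu(\mathcal{G})$, and local convergence gives $e_\mu(\mathcal{G})=\lim_k e(G_{n_k})=e(G_n)$. Now applying Theorem~\ref{fotetel} to the sequence $(G_{n_k})$ yields
\[
\mathrm{cc}(G_{n_k})=\mathrm{cost}(\mathcal{G})=e(G_n).
\]
Since $(H_{n_k})$ is itself a rewiring of $(G_{n_k})$ (the bi-Lipschitz constant is inherited), we conclude $\underline{e}(H_{n_k})\geq \mathrm{cc}(G_{n_k})=e(G_n)$; but by our choice of the first subsequence, $\underline{e}(H_{n_k})=\underline{e}(H_n)$, giving the desired bound.

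The main obstacle is the girth-to-treeing step: one must verify that local-global convergence of finite graphs with diverging girth indeed produces a treeing in the limit and that the limit object is genuinely a graphing on a standard probability space to which Gaboriau's theorem applies. These facts are standard in the theory of graphings, but the argument hinges on them; once they are in hand, the remainder is bookkeeping with subsequences and the direct invocation of Theorem~\ref{fotetel}.
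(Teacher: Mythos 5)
Your proof is correct and follows essentially the same route as the paper: the paper's own justification is precisely that any subsequential local-global limit is a treeing by the girth assumption, that Gaboriau's theorem gives the cost of a treeing as half its expected degree, and that Theorem~\ref{fotetel} then identifies this with the combinatorial cost. Your version merely spells out the subsequence bookkeeping and the trivial upper bound, which the paper leaves implicit.
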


Indeed, by the girth assumption, any subsequential local-global limit of
$G_{n}$ will be a so called treeing and by Gaboriau \cite{gabor}, the cost of
a treeing equals its expected degree divided by two.

In Theorem \ref{fotetel}, our graphs a priori have nothing to do with groups.
When they do come from a sofic approximation of $\Gamma$, the limiting
graphing gives rise to an essentially free probability measure preserving
action of $\Gamma$, that is unique up to weak equivalence in the sense of
Kechris \cite{kechris}. This case of local-global convergence has been
analyzed by the first author and Elek \cite{abelekpart}.

Following Gaboriau, we say that $\Gamma$ has fixed price $c$, if every
essentially free probability measure preserving action of $\Gamma$ has cost
$c$. Applying Theorem \ref{fotetel} gives us the following new result.

\begin{theorem}
\label{fixedprice}Let $\Gamma$ be a finitely generated group of fixed price
$c$. Then
\[
\mathrm{cc}(G_{n})=c
\]
for any sofic approximation $(G_{n})$ of $\Gamma$.
\end{theorem}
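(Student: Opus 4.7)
The plan is to reduce to Theorem \ref{fotetel} by exploiting local-global convergent subsequences of the sofic approximation $(G_{n})$. The key extra input is the result of the first author and Elek \cite{abelekpart} that any local-global limit of a sofic approximation of $\Gamma$ is a graphing whose orbit equivalence relation is generated by an essentially free p.m.p.\ action of $\Gamma$. Combined with the fixed price assumption, every such limit graphing has cost $c$; by Theorem \ref{fotetel} this already gives $\mathrm{cc}(G_{n_{k}}) = c$ for every local-global convergent subsequence $(G_{n_{k}})$ of $(G_{n})$. The task is then to transfer this equality from convergent subsequences to the whole sequence.

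For the upper bound $\mathrm{cc}(G_{n}) \leq c$, I would fix any local-global convergent subsequence $(G_{n_{k}})$ and invoke the \emph{moreover} clause of Theorem \ref{fotetel} to produce rewirings $H_{n_{k}}$ of $G_{n_{k}}$, with uniformly bounded bi-Lipschitz distortion, satisfying $\lim_{k} e(H_{n_{k}}) = c$. Extending to a rewiring of the full sequence by setting $H_{n} = G_{n}$ at all indices outside $\{n_{k}\}$ preserves the bi-Lipschitz bound (the new terms contribute distortion $1$) and yields $\liminf_{n} e(H_{n}) \leq \lim_{k} e(H_{n_{k}}) = c$.

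For the lower bound $\mathrm{cc}(G_{n}) \geq c$ I argue by contradiction. If some rewiring $(H_{n})$ of $(G_{n})$ had $\underline{e}(H_{n}) < c$, I would pick indices $n_{k}$ realizing the liminf, i.e.\ $e(H_{n_{k}}) \to \underline{e}(H_{n})$, and then pass to a further subsequence along which $(G_{n_{k}})$ is local-global convergent. The restricted sequence $(H_{n_{k}})$ is still a rewiring of $(G_{n_{k}})$ with the same bi-Lipschitz constants, and its lower edge density is at most $\underline{e}(H_{n}) < c$. This contradicts the equality $\mathrm{cc}(G_{n_{k}}) = c$ established in the first paragraph.

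The main obstacle I foresee is invoking \cite{abelekpart} in precisely the form needed: one has to know not merely that the limit graphing comes from a p.m.p.\ action of $\Gamma$, but that this action is \emph{essentially free}, so that the fixed price hypothesis applies. Once this is in hand the rest of the argument is a routine subsequence manipulation combined with Theorem \ref{fotetel} and the definition of combinatorial cost.
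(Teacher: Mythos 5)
Your proposal is correct and follows essentially the same route as the paper: both arguments pass to local-global convergent subsequences, use the fact from \cite{abelekpart} that any subsequential limit of a sofic approximation is a graphing of an essentially free p.m.p.\ action (hence of cost $c$ by fixed price), apply Theorem \ref{fotetel} to get $\mathrm{cc}(G_{n_k})=c$, and handle the lower bound by extracting a subsequence realizing the liminf of a hypothetical cheap rewiring and deriving a contradiction. Your extension trick (setting $H_n=G_n$ off the subsequence) just makes explicit the inequality $\mathrm{cc}(G_n)\leq\mathrm{cc}(G_{n_k})$ that the paper asserts without comment.
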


This gives an alternate proof of another result of Elek \cite{elek} that for
an amenable group $\Gamma$, any sofic approximation of $\Gamma$ has
combinatorial cost $1$. Indeed, by the Ornstein-Weiss theorem \cite{ornweiss},
amenable groups have fixed price $1$.

A sequence of subgroups is \textit{Farber}, if the quotient Schreier graphs
$\mathrm{Sch}(\Gamma,\Gamma_{n},S)$ form a sofic approximation of $\Gamma$. We
can now connect the cost to the rank gradient as follows.

\begin{theorem}
\label{rgrg}Let $\Gamma$ be a finitely generated group of fixed price $c$.
Then we have
\[
\mathrm{RG}(\Gamma,(\Gamma_{n}))=c-1
\]
for any Farber sequence $(\Gamma_{n})$ in $\Gamma$.
\end{theorem}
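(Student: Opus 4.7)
The plan is to prove the two inequalities $\liminf r(\Gamma,\Gamma_n) \geq c-1$ and $\limsup r(\Gamma,\Gamma_n) \leq c-1$ separately; only the second uses the Farber hypothesis.

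For the lower bound, the argument works for every finite index subgroup $\Gamma_n$ individually and rests on Gaboriau's induction formula for the cost of finite-index subequivalence relations. I would fix any essentially free p.m.p.\ action $\Gamma \curvearrowright (X,\mu)$; by fixed price its orbit equivalence relation $R$ has $\mathrm{cost}(R) = c$. Restricting to $\Gamma_n$ produces a free $\Gamma_n$-action whose orbit equivalence relation $R_n \subseteq R$ is a subrelation of index $[\Gamma:\Gamma_n]$, so Gaboriau yields $\mathrm{cost}(R_n) = [\Gamma:\Gamma_n](c-1)+1$. Any choice of $d(\Gamma_n)$ generators of $\Gamma_n$ defines a graphing of $R_n$ of cost $d(\Gamma_n)$, so $d(\Gamma_n) \geq [\Gamma:\Gamma_n](c-1)+1$, equivalently $r(\Gamma,\Gamma_n) \geq c-1$.

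For the upper bound I would use Theorems \ref{fixedprice} and \ref{fotetel}. By the Farber hypothesis, $G_n = \mathrm{Sch}(\Gamma,\Gamma_n,S)$ is a sofic approximation of $\Gamma$, so $\mathrm{cc}(G_n) = c$. Passing to a local-global convergent subsequence if necessary and invoking the moreover clause of Theorem \ref{fotetel}, I can select rewirings $H_n$ of $G_n$ with bi-Lipschitz distortion bounded by some constant $K$ and $\lim_n e(H_n) = c$. The goal is then to establish $d(\Gamma_n) \leq |E(H_n)| - |V(H_n)| + 1$ by a Reidemeister--Schreier argument on the rewired graph: bounded distortion allows each edge $(u,v) \in E(H_n)$ to be labeled by a word $w_{u,v}$ in $S$ of length at most $K$ satisfying $u \cdot w_{u,v} = v$ under the coset action; fixing a spanning tree of $H_n$ rooted at $\Gamma_n$ assigns each vertex $v$ a coset representative $t_v$ via the unique tree path, and each non-tree edge $(u,v)$ contributes the Reidemeister--Schreier element $t_u w_{u,v} t_v^{-1} \in \Gamma_n$, producing $|E(H_n)| - |V(H_n)| + 1$ candidate generators.

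The main obstacle is that these candidates a priori only generate $\Gamma' \cap \Gamma_n$, where $\Gamma' = \langle w_{u,v} \rangle \leq \Gamma$ is the subgroup generated by the edge labels, and $\Gamma'$ can in principle be proper in $\Gamma$. I would resolve this at negligible cost by augmenting $H_n$ with one extra edge from $\Gamma_n$ to $\Gamma_n \cdot s$ carrying the label $s$, for each $s \in S$. These $|S|$ extra edges have $G_n$-length $1$, so the bi-Lipschitz bound is preserved, and they contribute only $|S|/|V(H_n)| = o(1)$ to the edge density, leaving $\lim e(H_n) = c$ intact. After the augmentation $\Gamma' \supseteq S$, hence $\Gamma' = \Gamma$, which forces the Reidemeister--Schreier elements to generate all of $\Gamma_n$ by the standard argument. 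Dividing by $|V(G_n)|$ and letting $n \to \infty$ then yields $\limsup r(\Gamma,\Gamma_n) \leq \lim e(H_n) - 1 = c - 1$, and combined with the lower bound this gives $\mathrm{RG}(\Gamma,(\Gamma_n)) = c - 1$.
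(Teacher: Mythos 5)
Your lower bound is correct, and it takes a genuinely different route from the paper's: the paper obtains $\liminf r(\Gamma,\Gamma_n)\geq c-1$ by forming the diagonal product of the finite coset actions, using that it is essentially free by the Farber condition, and combining Lemma \ref{lemma:gcostequalsgradient} with monotonicity of groupoid cost under factor maps; you instead apply Gaboriau's compression/index formula to the restriction of an arbitrary essentially free action. Your version is clean and, notably, does not use the Farber hypothesis at all for this half, which is consistent with the statement.

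The upper bound, however, has a genuine gap at the step ``the Reidemeister--Schreier elements generate all of $\Gamma_n$.'' The elements $t_u w_{u,v}t_v^{-1}$ attached to the non-tree edges of $H_n$ generate exactly the image of $\pi_1(H_n,o)$ under the labelling homomorphism into $\Gamma$, and nothing forces this image to be all of $\Gamma_n$: each word $w_{u,v}$ is used only at the single vertex $u$, whereas the classical Reidemeister--Schreier argument needs, for each generator $s$ of $\Gamma$, the elements $t_{s\cdot v}^{-1}st_v$ for \emph{every} vertex $v$. Knowing that the labels generate $\Gamma$ as an abstract group therefore does not help. Concretely, to reduce a Schreier generator $t_{s\cdot v}^{-1}st_v$ to your candidates one replaces the $G_n$-edge $(v,s\cdot v)$ by an $H_n$-path reading a word $w$ with $w\cdot v=s\cdot v$; the discrepancy $t_v^{-1}(w^{-1}s)t_v$ is a conjugate of an element of word length at most $K+1$ that stabilizes $v$, and there is no reason for it to lie in the subgroup you have built. (Extreme case: if $s$ fixes $v$, the relevant $H_n$-path is empty and $t_v^{-1}st_v$ is simply lost.) Your augmentation by $|S|$ edges repairs this only at the root. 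The actual fix---and the place where the Farber hypothesis enters the combinatorial argument, which is the content of \cite[Theorem 8]{torsion} that the paper invokes in its alternative proof---is that for a Farber sequence all but $o(|V(G_n)|)$ vertices have injectivity radius exceeding $K+1$, so that $w^{-1}s=e$ holds in $\Gamma$ at those vertices, and one adds the at most $|S|$ missing Schreier generators at each of the remaining $o(|V(G_n)|)$ bad vertices by hand, which costs only $o(|V(G_n)|)$ extra generators and preserves the asymptotic bound. The paper's own proof sidesteps the issue entirely by working with generating subsets of the groupoid $M_{f_n}$, which by definition remember the group element carried by each arrow and must generate every arrow, via Lemma \ref{lemma:gcostequalsgradient} and Proposition \ref{prop:gcostsemicontinuity}. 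Without one of these ingredients your inequality $d(\Gamma_n)\leq |E(H_n)|-|V(H_n)|+1+|S|$ is unproved.
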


The same result is proved in \cite{miknik} for Farber \emph{chains}. Also, in
\cite{torsion} it is proved that any Farber sequence in a right angled group
has rank gradient zero. Right angled groups have fixed price $1$ by Gaboriau
\cite{gabor}, so this now immediately follows from Theorem \ref{rgrg}. Since
amenable groups also have fixed price $1$, we get the following.

\begin{corollary} \label{cor:rankgradzero}
\label{amenfarber}Let $\Gamma$ be a finitely generated amenable group. Then we
have
\[
\mathrm{RG}(\Gamma,(\Gamma_{n}))=0
\]
for any Farber sequence $(\Gamma_{n})$ in $\Gamma$.
\end{corollary}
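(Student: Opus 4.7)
The plan is to obtain this as an immediate consequence of Theorem \ref{rgrg} once we know the fixed price of amenable groups. So my first step is to recall the Ornstein--Weiss theorem \cite{ornweiss}: every essentially free p.m.p.\ action of a finitely generated amenable group is hyperfinite, and hyperfinite equivalence relations have cost $1$ (by a result of Levitt, or directly from Gaboriau's theory). Consequently every such $\Gamma$ has fixed price $1$ in the sense of Gaboriau.

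With this in hand, I would simply feed $c=1$ into Theorem \ref{rgrg}. That theorem asserts that for a finitely generated group of fixed price $c$ and any Farber sequence $(\Gamma_n)$ one has $\mathrm{RG}(\Gamma,(\Gamma_n)) = c - 1$; specializing yields $\mathrm{RG}(\Gamma,(\Gamma_n)) = 0$, which is exactly the claim (with the existence of the limit guaranteed by Theorem \ref{rgrg} itself).

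There is essentially no obstacle here: all the work has been done upstream, first in establishing Theorem \ref{fotetel} (identifying combinatorial cost with the cost of the local-global limit), then in Theorem \ref{fixedprice} (which turns fixed price into a statement about sofic approximations), and finally in Theorem \ref{rgrg} (which translates combinatorial cost on Schreier graphs into rank gradient). The only external input needed is the classical fact that amenable groups have cost $1$, and the corollary follows.
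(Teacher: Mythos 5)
Your proposal matches the paper's own derivation: the authors likewise invoke the Ornstein--Weiss theorem to conclude that amenable groups have fixed price $1$ and then apply Theorem \ref{rgrg} with $c=1$. The argument is correct and complete as stated.
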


When the sequence is not Farber, Corollary \ref{amenfarber} is clearly not
true, already for the standard lamplighter group (see \cite{abertjaikin}).
However, one can show that it still holds for \emph{finitely presented
}amenable groups.

\begin{theorem}
\label{fpamen}Let $\Gamma$ be a finitely presented amenable group. Then we
have
\[
\mathrm{RG}(\Gamma,(\Gamma_{n}))=0
\]
for any sequence $(\Gamma_{n})$ of distinct subgroups in $\Gamma$.
\end{theorem}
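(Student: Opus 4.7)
The plan is to adapt Lackenby's trichotomy theorem from descending chains to arbitrary sequences of distinct finite-index subgroups. Fix a finite presentation $\Gamma=\langle S\mid R\rangle$. Since $\Gamma$ is finitely generated, for every $N$ only finitely many subgroups have index at most $N$, and the distinctness hypothesis therefore forces $[\Gamma:\Gamma_n]\to\infty$. It is enough to show that every subsequence of $(\Gamma_n)$ has a further subsequence along which $r(\Gamma,\Gamma_n)\to 0$. By compactness of local-global convergence we may pass to a subsequence for which the Schreier graphs $G_n=\operatorname{Sch}(\Gamma,\Gamma_n,S)$ converge locally-globally to a graphing $\mathcal G$ on a probability space $(X,\mu)$.

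The heart of the argument is the following local-global analogue of Lackenby's trichotomy, which must be established for finitely presented $\Gamma$: along the chosen subsequence at least one of
\begin{enumerate}
\item[(a)] $(G_n)$ is a family of expanders, so $\mathcal G$ has a spectral gap, the analogue of property $(\tau)$;
\item[(b)] a positive-measure piece of the orbit equivalence relation of $\mathcal G$ splits non-trivially as a free product with amalgamation over a finite subgroup;
\item[(c)] $r(\Gamma,\Gamma_n)\to 0$
\end{enumerate}
holds. Lackenby's chain argument rests on two ingredients. The first is that the presentation $2$-complex of $\Gamma$ has only boundedly many $2$-cells, which constrains the rank of $\Gamma_n$ in terms of the combinatorics of $G_n$. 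The second is that, if the rank gradient is bounded away from zero while the $G_n$ fail to expand, then short cycles in the lifted presentation $2$-complex yield an essential splitting of a finite-index subgroup. Both ingredients are local in $G_n$ and so survive local-global convergence; this is what lets the argument transport, and the finite presentation is exactly what bounds the number of relator loops that need to be tracked uniformly in $n$. The failure of Theorem~\ref{fpamen} for the (not finitely presented) lamplighter group shows that this hypothesis cannot be dropped.

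Once the trichotomy is in place, amenability takes over. If $\Gamma$ is amenable then the Schreier graphs of any sequence of subgroups with $[\Gamma:\Gamma_n]\to\infty$ have Cheeger constants tending to zero by a standard F\o lner-set argument, so they cannot form an expander family and alternative (a) is ruled out. Amenable groups also contain no non-abelian free subgroup, so no finite-index subgroup of $\Gamma$ admits a non-trivial free product with amalgamation over a finite subgroup; hence (b) is impossible. Only (c) remains, which is the desired conclusion.

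The main obstacle is establishing the adapted trichotomy itself. Lackenby's original proof is tied to the chain structure via statements about how Cheeger cuts behave in successive covers, and the work is to re-express each of the three alternatives as a local-global property of the limit graphing $\mathcal G$ and to use the framework of Theorem~\ref{fotetel} to translate between the combinatorics of $G_n$, the structure of $\mathcal G$, and the rank gradient of the sequence. The finite presentation enters throughout this translation, keeping the $2$-complex bookkeeping finite and uniform.
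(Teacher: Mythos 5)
Your high-level strategy is the same as the paper's: prove a trichotomy for arbitrary sequences of finite-index subgroups of a finitely presented group, then use amenability to kill the first and third alternatives. But the proposal has a genuine gap: the trichotomy itself, which is the entire mathematical content of the argument, is only asserted. Saying that Lackenby's two ingredients ``are local in $G_n$ and so survive local-global convergence'' is not a proof; Lackenby's argument for chains uses the nesting of the covers in an essential way, and the whole point of the paper's Theorem~\ref{lack} is to replace that structure. Concretely, what is needed (and what you do not supply) is a device that, from the failure of the first alternative, produces in a \emph{single} Schreier graph $\mathrm{Sch}(\Gamma,\Gamma_n,S)$ a partition into $k$ pieces of nearly equal size with boundary of size $o([\Gamma:\Gamma_n])$ --- uniformly enough to beat the constant $c$ in $r(\Gamma,\Gamma_n)>c$ --- and then a Reidemeister--Schreier bookkeeping argument that converts this partition either into a rank bound contradicting $r(\Gamma,\Gamma_n)>c$ or into a non-trivial amalgamated decomposition $\Gamma_n\cong *_L H_i$. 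None of this is routine.

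A second, more structural problem is your choice of first alternative. You formulate it as ``$(G_n)$ is an expander family / $\mathcal G$ has spectral gap.'' For non-normal, non-nested sequences this is the wrong (too strong) hypothesis to negate: the trichotomy one can actually prove has as its first alternative that the sequence is not \emph{dispersive}, i.e.\ some subsequential local-global limit has a strongly ergodic ergodic component of positive measure. Strong ergodicity is strictly weaker than spectral gap, so your trichotomy with ``expanders'' is a strictly stronger claim, and the mechanism that produces the almost-invariant partitions (the Ab\'ert--Weiss theorem that a non-strongly-ergodic ergodic action is weakly equivalent to its product with the trivial action on $[0,1]$, modeled back on the finite graphs by weak containment) is invisible in your sketch. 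For the amenable application this happens not to matter --- amenable groups admit no strongly ergodic actions by Schmidt's theorem, just as their Schreier graphs are never expanders --- but as written your intermediate theorem is both unproved and likely false in the stated generality. Minor further points: the amalgam in the splitting need not be over a \emph{finite} subgroup (and need not be: a non-trivial amalgam, meaning index at least $3$ in two factors, already contains a non-abelian free group, so amenability rules it out regardless); and your alternative (b) drifts between a splitting of the equivalence relation and a splitting of a subgroup $\Gamma_n$, which are different statements.
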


Behind this is the following extension of \cite{abertjaikin} that generalized
a theorem of Lackenby for normal chains \cite{lack}.

We call a sequence of finite graphs $G_{n}$ \emph{dispersive} if for any
subsequential local-global limit $\mathcal{G}$ of $G_{n}$, $\mathcal{G}$ has no strongly ergodic component of positive measure. For the notion of strong ergodicity and a graph theoretic reformulation see
Section \ref{trichotomy}.

\begin{theorem}
\label{lack}Let $\Gamma$ be a finitely presented group generated by a finite
symmetric set $S$. Let $(\Gamma_{n})$ be an arbitrary sequence of subgroups of finite
index in $\Gamma$. Then at least one of the following holds:
\begin{enumerate}[\hspace{0.5 cm}1)]
\item the sequence $\mathrm{Sch}(\Gamma,\Gamma_{n},S)$ is not dispersive;
\item $\mathrm{RG}(\Gamma,(\Gamma_{n}))=0$;
\item there exists some $n$ such that $\Gamma_{n}$ decomposes as a non-trivial amalgamated
product.
\end{enumerate}
\end{theorem}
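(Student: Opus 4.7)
The plan is to assume conditions (1) and (3) fail and conclude (2). By compactness of local-global convergence I pass to a subsequence along which the Schreier graphs $G_{n}=\mathrm{Sch}(\Gamma,\Gamma_{n},S)$ converge to a graphing $\mathcal{G}$. Since (1) fails, $\mathcal{G}$ has no strongly ergodic component of positive measure. Using the graph-theoretic reformulation of strong ergodicity from Section~\ref{trichotomy}, this dispersivity translates into Cheeger-type cuts: for every $\varepsilon>0$ and all sufficiently large $n$ there is a subset $A_{n}\subseteq V(G_{n})$ with $\varepsilon\leq |A_{n}|/|V(G_{n})|\leq 1-\varepsilon$ and edge boundary $|\partial A_{n}|=o(|V(G_{n})|)$.

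Next I would promote these combinatorial cuts to topological cuts in the presentation 2-complex. Fix a finite presentation $\Gamma=\langle S\mid R\rangle$, let $\widetilde{X}$ be its Cayley 2-complex and $X_{n}=\widetilde{X}/\Gamma_{n}$, a finite CW-complex with 1-skeleton $G_{n}$ and $\pi_{1}(X_{n})=\Gamma_{n}$. Some 2-cells may straddle $\partial A_{n}$, but each relator has length at most $L=\max_{r\in R}|r|$, so the number of straddling 2-cells is at most $L\cdot|\partial A_{n}|=o(|V(G_{n})|)$. Absorbing every such 2-cell into either $A_{n}$ or its complement changes the sizes of both sides by a quantity of order $o(|V(G_{n})|)$ and produces full subcomplexes $Y_{n},Y_{n}'\subseteq X_{n}$ with $Y_{n}\cup Y_{n}'=X_{n}$ whose intersection $C_{n}=Y_{n}\cap Y_{n}'$ is a 1-subcomplex with $o(|V(G_{n})|)$ cells.

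Now I would apply van Kampen / Bass--Serre to the decomposition $X_{n}=Y_{n}\cup Y_{n}'$. This expresses $\Gamma_{n}$ as the fundamental group of a finite graph of groups whose edge groups come from components of $C_{n}$ and whose vertex groups come from the components of $Y_{n}$ and $Y_{n}'$. If any edge of this graph produced a proper amalgamated product in the Bass--Serre sense, then $\Gamma_{n}$ itself would decompose as a non-trivial amalgamated product, so (3) would hold, contradiction. Hence every edge is degenerate, i.e.\ one of its two incident vertex groups equals the edge group. Collapsing every degenerate edge leaves a 2-complex with the same fundamental group $\Gamma_{n}$ and whose 1-skeleton has only $o(|V(G_{n})|)$ edges. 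Since $d(\Gamma_{n})$ is bounded by the number of 1-cells in any 2-complex with $\pi_{1}=\Gamma_{n}$, we get $d(\Gamma_{n})=o(|V(G_{n})|)=o([\Gamma:\Gamma_{n}])$, i.e.\ $r(\Gamma,\Gamma_{n})\to 0$.

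The main obstacle, and the place where finite presentation is genuinely needed, is the second step: arranging the cut to be \emph{2-cell compatible} with only an $o(|V(G_{n})|)$-size correction. The delicate bookkeeping is that the Bass--Serre splitting extracted from a 2-cell-compatible cut is either an honest amalgamation of $\Gamma_{n}$ (forcing (3)) or degenerate in a way that saves 1-cells without changing $\pi_{1}$ (forcing the rank gradient estimate). Verifying that every alternative to (3) genuinely gives the 1-cell savings needed to drive $r(\Gamma,\Gamma_{n})\to 0$, and that this can be done simultaneously along the whole sequence rather than one $n$ at a time, is where the argument from \cite{abertjaikin} has to be carefully reworked in the local-global framework.
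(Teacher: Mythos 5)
Your overall strategy---dispersivity gives almost-invariant cuts, which you make compatible with the relators at a cost of $o(|V(G_n)|)$ and then convert into a splitting of $\Gamma_n$, with the trivial-splitting case yielding the rank bound---is the right one and matches the paper's (the paper works with Reidemeister--Schreier presentations rather than presentation $2$-complexes, but this is the same construction). However, there is a genuine gap in the final step, exactly at the point you flag as ``delicate bookkeeping.'' A cut into \emph{two} pieces $A_n$, $V\setminus A_n$ with $\varepsilon\le |A_n|/|V|\le 1-\varepsilon$ cannot give $d(\Gamma_n)=o([\Gamma:\Gamma_n])$. If the resulting splitting $\Gamma_n\cong H_1\ast_{L}H_2$ is degenerate, say $L$ has index at most $2$ in $H_2$, what you gain is only that $\Gamma_n$ is generated by $H_1$ together with at most one extra element; you do \emph{not} get to discard the $1$-cells carried by the big piece. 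The construction gives no bound on $d(H_1)$ better than roughly $|S|\,|A_1|=\Theta(|V(G_n)|)$, so collapsing degenerate edges does not leave a $2$-complex with $o(|V(G_n)|)$ one-cells; that claim is false. Degeneracy of a splitting says nothing about the internal rank of the surviving vertex group.

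The missing idea is quantitative. One assumes, for contradiction, that $r(\Gamma,\Gamma_n)>c>0$ along a subsequence, chooses an integer $k$ with $\left(\tfrac32|S|+1\right)/k\le c/2$, and uses dispersivity (Lemma \ref{lemma:dispersive}) to partition $V(G_n)$ into $k$ almost-invariant pieces $A_1,\dots,A_k$, each of size close to $|V|/k$, with total boundary smaller than $|V|/\bigl(k(1+M^2)\bigr)$. This yields a $k$-fold amalgam $\Gamma_n\cong \ast_{L}H_i$. If it is trivial in the paper's sense, then all but one factor contributes at most one generator beyond $L$, and the one surviving factor needs only about $\tfrac32|S|\,|V|/k$ generators because \emph{its} piece is small; together with $|Y|\le|V|/k$ this gives $r(\Gamma,\Gamma_n)\le c$, the desired contradiction. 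So the conclusion in the degenerate case is not $\mathrm{RG}=0$ outright but a bound of order $|S|/k$, which is why $k$ must grow like $1/c$ and why a two-piece cut, or any partition into pieces of macroscopic size, is insufficient. Your sketch as written does not recover this, and the sentence ``collapsing every degenerate edge leaves a $2$-complex whose $1$-skeleton has only $o(|V(G_n)|)$ edges'' is where it breaks.
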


We show that sequences in amenable groups are dispersive, and clearly they cannot decompose as a non-trivial amalgamated product. Thus Theorem
\ref{fpamen} follows as a corollary of Theorem \ref{lack}. Note that for a chain of
subgroups, being dispersive is equivalent to saying that the limiting
profinite action is not strongly ergodic. Hence, Theorem \ref{lack} implies
\cite[Theorem 3]{abertjaikin}. When the $\Gamma_{n}$ are normal in $\Gamma$, being dispersive is
equivalent to saying that $(\Gamma_{n})$ has no subsequence with Lubotzky's
property ($\tau$). So Theorem \ref{lack} also generalizes Lackenby's trichotomy
theorem \cite[Theorem 1.1.]{lack}. \bigskip

The structure of the paper is as follows. In Section \ref{prelimsection} we
define the basic notions and state some lemmas that we need for our main result. In Section \ref{section:fotetel} we prove Theorems \ref{fotetel} and \ref{fixedprice}. We introduce the analogous notions and results for group actions in Section \ref{section:groupactions} and prove Theorem \ref{rgrg}. We prove the results on finitely presented groups in Section \ref{trichotomy}. Finally, in Section
\ref{opensection} we list some open problems and suggest further directions of research.

\section{Preliminaries \label{prelimsection}}

In this section we define the basic objects of investigation of the paper and
state some known results.

\subsection{Local-global convergence}

Let $U_r$ denote the set of connected, rooted graphs with radius at most $r$ with all degrees bounded by some integer $D$. For any graph $G$, if we pick a vertex $v \in V(G)$ and look at its $r$-neighborhood $B_G(r,v)$ rooted at $v$ we get an element of $U_r$. Picking $v$ uniformly at random gives us a probability measure on $U_r$ which we will denote $P_{G,r}$, and refer to as the $r$-neighborhood statistics of $G$.

For any finite set $X$ let $M(X)$ denote the set of probability measures on $X$. We say that a sequence of graphs $(G_n)$ is \emph{locally} (or Benjamini-Schramm) convergent, if for any $r$ the sequence of probability measures $P_{G_n,r} \in M(U_r)$ converge to a limit distribution as $n \to \infty$. 

We will work with a more refined notion of convergence, and following notation from \cite{halosze} we introduce a colored version of the neighborhood statistics. Let $K(k,G)= \big\{\varphi: V(G) \to \{1,\ldots,k\}\big\}$ denote the set of $k$-colorings of the vertices of $G$. Let $U_{r}^k$ denote the set of rooted, connected, $k$-colored graphs of radius at most $r$. For any coloring $\varphi \in K(k,G)$ we can associate a colored neighborhood statistic $P_{G,r}[\varphi] \in M(U_r^k)$ as before, by choosing a uniform random vertex $v$, and then considering its $r$-neighborhood $B_G(r,v)$, this time together with the coloring $\varphi \vert_{B_G(r,v)}$.

 For $\eta_1, \eta_2 \in M(U_r^k)$ let 
\[d_{TV} (\eta_1, \eta_2) = \sup_{A \subseteq U_r^k} |\eta_1(A)- \eta_2(A)|.\] 

Note that $d_{TV}$ is the total variation distance. As we are operating in a finite dimensional space all the usual norms are equivalent.

Intuitively, a sequence of graphs $(G_n)$ is local-global convergent if for any $r,k \in \mathbb{N}$ and for $i,j$ large enough the colored neighborhood distribution $P_{G_i,r}[\varphi]$ for any $k$-coloring $\varphi$ can be approximately modeled on $G_j$, that is we can find some coloring $\psi$ such that $P_{G_i,r}[\varphi]$ and $P_{G_j,r}[\psi]$ are arbitrarily close. 

For a finite graph $G$ let $Q_{G,r}^k$ denote the finite set of possible colored neighborhood statistics arising from a graph $G$:
\[Q_{G,r}^k = \big\{P_{G,r}[\varphi] \mid \varphi \in K(k,G) \big\} \subseteq  M(U_r^k).\]

\begin{definition}
We say that a sequence of graphs $(G_n)$ is \emph{local-global convergent}  if for every $r,k \in \mathbb{N}$ the compact sets $(Q_{G_n,r}^k)$ converge in the Hausdorff distance on $\big( M(U_r^k), d_{TV} \big).$
\end{definition}

In \cite{halosze} the authors show that every sequence of bounded degree graphs has a locally-globally covergent subsequence, and that \emph{graphings} can be considered as the limit objects of convergent sequences.

\begin{definition}[\cite{halosze} Definition 3.1] \label{def:graphing}
Let $X$ be a Polish topological space and let $\mu$ be a probability measure on the Borel sets in $X$. A \emph{graphing} (with degree bound $D$) is a graph $\mathcal{G}$ on $V(\mathcal{G}) = X $ with Borel edge set $E(\mathcal{G}) \subset X \times X$ in which all degrees are at most $D$ and 
\begin{equation} \label{eqn:graphing}
\int_{A}e (x,B) \ d \mu (x) = \int_{B} e(x,A) \ d \mu (x)
\end{equation}
for all measurable sets $A,B \subseteq X$, where $e(x, S)$ is the number of edges from $x \in X$ to $S \subseteq X$.
\end{definition}

Every finite graph $G$ is a graphing with $X= V(G)$ and $\mu$ the uniform distribution on $V(G)$. 

The colored neighborhood statistics $P_{\mathcal{G},r}[\varphi]$ can easily be defined for a graphing $\mathcal{G}$, provided that the coloring $\varphi: X \to \{1, \ldots , k\}$ is chosen to be Borel. We pick a random vertex $x \in X$ according to $\mu$, and consider its colored $r$-neighborhood in $\mathcal{G}$.

As opposed to the finite case we now have to take the closure of all possible such statistics in order to obtain a compact set. Let \[Q_{\mathcal{G},r}^k = \overline{\big\{\mathcal{P}_{G,r}[\varphi] \mid \varphi: V(\mathcal{G}) \to \{1, \ldots, k\} \textrm{ Borel} \big\}}^{d_{TV}} \subseteq  M(U_r^k).\]

The graphing $\mathcal{G}$ is a local-global limit of the sequence $(G_n)$ if $Q_{G_n, r}^k \to Q_{\mathcal{G}, r}^k$ in the Hausdorff distance for all $r$ and $k$. 

We say that two graphings $\mathcal{G}$ and $\mathcal{H}$ are local-global equivalent, if the sets $Q_{\mathcal{G},r}^k$ and $Q_{\mathcal{H},r}^k$ are the same. Note that the limit is unique only up to local-global equivalence. Although we will only be dealing with sequences of finite graphs, observe that the above definition of convergence makes sense for sequences of graphings as well. 
\bigskip

\subsection{Cost}

For a graphing $\mathcal{G}$ and a vertex $x \in X$ let $[x]_{\mathcal{G}}$ denote the connected component of $x$ in $\mathcal{G}$. For two graphings $\mathcal{G}$ and $\mathcal{H}$ on the same vertex set $X$ we write $\mathcal{G} \sim \mathcal{H}$ if they have the same connected components, that is $[x]_{\mathcal{G}}=[x]_{\mathcal{H}}$ $\mu$-almost surely. 

Let $\mathcal{R}_{\mathcal{G}} \subseteq X \times X$ denote the measurable equivalence relation generated by $\mathcal{G}$, where two points are in the same equivalence class if they are in the same connected component of $\mathcal{G}$. Clearly $\mathcal{G} \sim \mathcal{H}$ if and only if $\mathcal{R}_{\mathcal{G}}=\mathcal{R}_{\mathcal{H}}$ up to measure zero. Note that every component of $\mathcal{G}$ is countable.

We will introduce a way of measuring edge sets of graphings. Let $\tilde{\mu}$ be the measure on $X \times X$ obtained the following way. For a measurable subset $C \subseteq X \times X$ let 
\[\tilde{\mu} (C) = \int_{X} \#\big\{y  \mid (x,y) \in C, y \in [x]_{\mathcal{G}}\big\} \ d \mu(x).\]
In other words on each fiber $\{x\} \times X$ we consider the counting measure concentrated on $ \{x\} \times [x]_{\mathcal{G}}$, and integrate these with respect to $\mu$ on the first coordinate.

This measure $\tilde{\mu}$ is $\sigma$-finite, it is concentrated on $\mathcal{R}_{\mathcal{G}}$, and it is easy to see that in fact it only depends on the relation $\mathcal{R}_{\mathcal{G}}$. We can similarly define $\tilde{\mu}'$ by taking the counting measures on the fibers $[x]_{\mathcal{G}}$ and integrate with respect to $\mu$ over the second coordinate. A standard argument shows that condition (\ref{eqn:graphing}) in Definition \ref{def:graphing} is equivalent to $\tilde{\mu}=\tilde{\mu}'$.

The \emph{cost} of $\mathcal{G}$ is defined to be

\[\textrm{cost} (\mathcal{G}) = \frac{1}{2} \inf \big\{ \tilde{\mu} \big( E(\mathcal{H}) \big) \ \big| \  \mathcal{H} \sim \mathcal{G}\big\}.\]
The normalization factor $\frac{1}{2}$ is included to account for counting every edge twice and to ensure coherence with \cite{gabor}. Note that the $\tilde{\mu}$ measure of the edge set of a graphing is half the expected degree of a $\mu$-random point. It is clear that if $\mathcal{H} \sim \mathcal{G}$, then their cost is the same, in fact the cost only depends on $\mathcal{R}_{\mathcal{G}}$.

The following lemma proved in \cite{gabor} sheds some light on the bi-Lipschitz condition used in the definition of combinatorial cost. We will also use it in the proof of Theorem \ref{fotetel}.

\begin{lemma}[Gaboriau] \label{lemma:bilipschitz}
Let $\mathcal{G}$ be a graphing. For every $\varepsilon > 0$ there exists some integer $L$ and some $\mathcal{H} \sim \mathcal{G}$ such that $\tilde{\mu}\big(E(\mathcal{H})\big) < \mathrm{cost}(\mathcal{G}) + \varepsilon$ and $\mathcal{G}$ and $\mathcal{H}$ are $L$-bi-Lipschitz equivalent, that is the graph metrics they define on the connected components are within a factor of $L$ from each other.
\end{lemma}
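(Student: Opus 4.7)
The plan is to start from the infimum defining cost: pick $\mathcal{H}_0 \sim \mathcal{G}$ with $\tilde\mu(E(\mathcal{H}_0))$ within $\varepsilon/2$ of the infimum, and then perturb $\mathcal{H}_0$ into a graphing $\mathcal{H}$ which has the same equivalence relation but is, in addition, bi-Lipschitz equivalent to $\mathcal{G}$.

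The key input is that $\mathcal{R}_{\mathcal{H}_0} = \mathcal{R}_{\mathcal{G}}$, so $d_{\mathcal{H}_0}$ is $\tilde\mu$-a.e.\ finite on $E(\mathcal{G})$ and symmetrically $d_{\mathcal{G}}$ is $\tilde\mu$-a.e.\ finite on $E(\mathcal{H}_0)$. Since the degree bound $D$ forces $\tilde\mu(E(\mathcal{G}))$ and $\tilde\mu(E(\mathcal{H}_0))$ to be at most $D<\infty$, monotone convergence produces an integer $L$ for which both
\[
B := \{e \in E(\mathcal{G}) : d_{\mathcal{H}_0}(e) > L\} \quad\text{and}\quad C := \{e \in E(\mathcal{H}_0) : d_{\mathcal{G}}(e) > L\}
\]
have $\tilde\mu$-measure below $\varepsilon/8$. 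The natural candidate is $\mathcal{H} = (\mathcal{H}_0 \setminus C) \cup B$. Every surviving edge of $\mathcal{H}_0 \setminus C$ satisfies $d_{\mathcal{G}} \leq L$ by construction, and every $e \in B \subseteq E(\mathcal{G})$ has $d_{\mathcal{G}}(e) = 1$, giving one Lipschitz direction. Each $\mathcal{G}$-edge is either in $B$ (hence in $\mathcal{H}$) or has $d_{\mathcal{H}_0} \leq L$, which gives the other direction provided the relevant $\mathcal{H}_0$-paths survive the deletion of $C$. The edge measure bound is immediate from $\tilde\mu(E(\mathcal{H})) \leq \tilde\mu(E(\mathcal{H}_0)) + \tilde\mu(B)$.

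The main obstacle, and the delicate step of the proof, is the repair: deleting $C$ can both break the equivalence relation on a set of positive measure and lengthen the $\mathcal{H}_0$-paths witnessing the second Lipschitz direction. I would handle this by augmenting $\mathcal{H}$ with a system of $\mathcal{G}$-paths reconnecting the endpoints of each $e \in C$. Since $d_{\mathcal{G}}$ is unbounded on $C$, a naive replacement by $\mathcal{G}$-geodesics could blow up the cost, and the cleanest remedy is a Feldman--Moore type decomposition of $\mathcal{R}$ into countably many graphs of Borel partial isomorphisms. On each such piece the $\mathcal{H}_0$-generator agrees with a bounded $\mathcal{G}$-word on a subset of almost full measure, so the small leftover can be absorbed into $C$, whose $\tilde\mu$-budget we already controlled. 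Summing the small contributions over the countably many pieces and enlarging $L$ if necessary, one obtains a single $L$ for which both Lipschitz directions hold, $\mathcal{H} \sim \mathcal{G}$, and the edge measure of $\mathcal{H}$ exceeds that of $\mathcal{H}_0$ by at most $\varepsilon/2$, completing the construction.
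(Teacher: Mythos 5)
First, a caveat: the paper does not prove this lemma---it is quoted from Gaboriau---so I am judging your argument on its own terms. Your skeleton is right: a near-optimal $\mathcal{H}_0$, exceptional sets $B$ and $C$ produced by measure exhaustion, and the candidate $(\mathcal{H}_0\setminus C)\cup B$; and you correctly locate the difficulty in the deletion of $C$. But the repair you propose does not work. Reconnecting the endpoints of each $e\in C$ by $\mathcal{G}$-paths costs at least $\int_C d_{\mathcal{G}}\,d\tilde{\mu}$, and this is not controlled by $\tilde{\mu}(C)$: the function $d_{\mathcal{G}}$ is finite a.e.\ on $E(\mathcal{H}_0)$ but need not be integrable against $\tilde{\mu}$, so no choice of small $C$ makes the reconnection affordable. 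The Feldman--Moore remedy does not rescue this: each partial isomorphism in the decomposition agrees with a bounded $\mathcal{G}$-word only up to its own constant $L_n$ and its own exceptional set, and with countably many pieces one may have $\sup_n L_n=\infty$, so ``enlarging $L$ if necessary'' has no finite value to enlarge to. This is a genuine gap, not a presentational one.

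The way out is to notice that the deleted edges never need to be reconnected; what must be protected are the $\mathcal{H}_0$-paths witnessing the second Lipschitz direction, and this is arranged by coupling the two exhaustions rather than running them independently. For $N\geq 1$ let $E_N$ be the set of $\mathcal{H}_0$-edges $f$ with $d_{\mathcal{G}}(f)\leq N$, and let $D_N$ be the set of $\mathcal{G}$-edges whose endpoints are joined by a path of length at most $N$ using only edges of $E_N$. Since the components agree, $D_N$ increases to $E(\mathcal{G})$ up to a null set, so some $N$ makes $B=E(\mathcal{G})\setminus D_N$ of $\tilde{\mu}$-measure less than $\varepsilon/2$. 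Then $\mathcal{H}=E_N\cup B$ works with $L=N$: every $\mathcal{H}$-edge has $d_{\mathcal{G}}\leq N$, every $\mathcal{G}$-edge is either in $B$ or spanned by a surviving $E_N$-path of length at most $N$ (so generation and the other Lipschitz bound follow), and the edge measure grows by at most $\tilde{\mu}(B)$. One further small slip in your write-up: $\tilde{\mu}(E(\mathcal{H}_0))\leq D$ is not automatic, since $\mathcal{H}_0$ need not obey the degree bound of $\mathcal{G}$; finiteness holds instead because $\tilde{\mu}(E(\mathcal{H}_0))$ was chosen within $\varepsilon/2$ of the infimum, which is at most $\tilde{\mu}(E(\mathcal{G}))\leq D$.
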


\subsection{Combinatorial cost}

The combinatorial analogue of cost for sequences of graphs is due to Elek \cite{elek}. Let $(G_n)$ be a sequence of graphs with $|V(G_n)| \to \infty$, and degree bounded by $D$. The sequence $(H_n)$ is a \emph{rewiring} of $(G_n)$ -- which we will denote $(H_n) \sim (G_n)$ -- if they have the same vertex set, and the distances defined by the graphs are uniformly bi-Lipschitz equvialent, that is $V(G_n)=V(H_n)$ and there exists some natural number $L$ such that for all $n \in \mathbb{N}$ 
\[\frac{1}{L} d_{H_n} (x,y) \leq d_{G_n} (x,y) \leq L d_{H_n} (x,y) \textrm{ for all } x,y \in V(G_n).\]
The \emph{lower edge density} of a graph sequence is defined as follows.
\[\underline{e} \big( (H_n) \big)= \liminf_{n \to \infty} \frac{|E(H_n)|}{ |V(H_n)| }.\]

\begin{definition}
The \emph{combinatorial cost} of a sequence $(G_n)$ is the infimum of the lower edge densities of its rewirings:
\[\mathrm{cc} \big( (G_n) \big) = \inf \big\{ \underline{e}\big( (H_n) \big) \ \big| \  (H_n) \sim (G_n) \big\}.\]
\end{definition}

\section{The cost of a local-global limit} \label{section:fotetel}
In this section we will prove Theorems \ref{fotetel} and $\ref{fixedprice}$.

\subsection{Proof of the main result}

We aim to show that the combinatorial cost of a locally-globally convergent graph sequence is equal to the cost of its limit. The idea of the proof is that if there is a cheap rewiring of the sequence $(G_n)$, then we can encode it into a coloring which then can be modeled with small error on the limit graphing $\mathcal{G}$. Using this coloring on the limit we can reconstruct a cheap graphing that (after some small modification) spans the same connected components as $\mathcal{G}$. In order to make this reconstruction process possible we will need to break the possible local symmetries of the graphs.

\bigskip

\begin{proof} [Proof of Theorem \ref{fotetel}]
Fix $\varepsilon > 0$ and suppose that $(H_n)$ is an $L$-rewiring of $G_n$ such that $\underline{e}(H_n) < \mathrm{cc}(G_n) + \varepsilon$. Set $r=L^2+1$, $R=2r$.

As the degrees of the $G_n$ are bounded by $D$, there is a constant $k$ such that each $G_n$ can be vertex colored by $k$ colors so that no two vertices within distance $2R$ have the same color. Fix such a coloring $\eta_n: V(G_n) \to \{1,\ldots, k\}$ for each $G_n$. The role of these $\eta_n$ is merely to break all possible symmetries of the $R$-neighborhoods.

For each vertex $v \in V(G_n)$ define its \emph{type} to be the following data. Let $(\alpha_v, \eta_v)$ denote the colored $R$-neighborhood of $v$ in $G_n$, that is $B_{G_n}(R,v)$ rooted at $v$, together with $\eta_{n}\vert_{ B_{G_n}(R,v)}$. Let $F_v$ denote the set of edges of $H_n$ that connect two vertices from $B_{G_n}(R,v)$. The type of $v$ is the triple $(\alpha_v, \eta_v, F_v)$. We think of this as a rooted, vertex colored graph with some extra distinguished edges ($F_v$) indicated. Note that $\eta_v$ assigns distinct colors to the vertices of $\alpha_v$. 

Let $T$ denote the set of all possible types. Note that $|T|$ is finite, as $k$ and $R$ are fixed. Now assigning each vertex its type can be considered as a coloring of $V(G_n)$ by $|T|$ colors. Let $\varphi_n$ denote this coloring:
\[\varphi_n : V(G_n) \to T, \quad \varphi_n(v) = (\alpha_v, \eta_v, F_v) \textrm{ for all } v \in V(G_n).\]
Observe that $\eta_n$ is a function of $\varphi_n$: for all $v \in V(G_n)$, $\eta_n(v)$ equals the color of the root of $\varphi_n$.

For any vertex $v \in V(G_n)$ the edge $(v,u) \in E(G_n)$ connecting $v$ to its neighbor $u$ can be traversed using at most $L$ edges of $H_n$. Choose a shortest path between $v$ and $u$ in $H_n$. Because the length of the steps on the edges of $H_n$ are bounded by $L$ in terms of the graph distance in $G_n$ the $r=L^2+1$-neighborhood of $v$ in $G_n$ already contains this shortest path. This holds for all neighbors $u$. This fact will be reflected in the type of $v$, namely for every neighbor of the root of $\alpha_v$ there will be a path of length at most $L$ using edges from $F_v$ connecting the root to the neighbor. We will refer to this property by saying that \emph{$F_v$ witnesses $L$-bi-Lipschitz equivalence at the root}.

Since the $G_n$ converge locally-globally to $\mathcal{G}$ if we choose $n$ large enough, we can find a Borel coloring $\varphi: X \to T$ such that \[d_{TV}\big(P_{G_n,r}[\varphi_{n}], P_{\mathcal{G},r}[\varphi]\big) < \delta,\] that is we can model the local statistics of $\varphi_n$ on $\mathcal{G}$ with at most $\delta$ error. Choose $\delta$ such that $\delta (D + \frac{1}{2} D^L) < \varepsilon$.

The type of $x$ gives a suggestion on how to construct a cheap graphing around $x$, which is $F_x$, the collection of distiguished edges. The idea is to consider the $r$-neighborhood of $x$ in $\mathcal{G}$, and choose the edges of some graphing $\mathcal{H}$ locally according to $F_x$. This $\mathcal{H}$ would have the same connected components as $\mathcal{G}$ because $F_x$ witnesses bi-Lipschitz equivalence at the root, and would be cheap because the expected $\mathcal{H}$-degree of a point is close to the expceted $F_v$-degree of the root in $P_{G_n,r}[\varphi_{n}]$. The problem is that the $r$-neighborhood of $x$ in $\mathcal{G}$ is a priori not the same as what the type of $x$ suggests it is. 

However, we will show that the above idea works for most of the points, and after a slight modification the resulting graphing will be a cheap generating graphing of the relation $\mathcal{R}_{\mathcal{G}}$. 

First we construct a Borel coloring $\eta: X \to \{1, \ldots, k\}$ from $\varphi$ imitating the way the $\eta_n$ could be recovered from the $\varphi_n$. Let $x \in X$, and let $\varphi(x)= (\alpha_x, \eta_x, F_x)$ be the type assigned to $x$ by $\varphi$. The type suggests a color for the root, namely $\eta_x (o)$ where $o$ is the root of $\alpha_x$. So we set $\eta(x) = \eta_x(o)$. Observe that $\eta_x$ is a coloring of the rooted graph $\alpha_x$, and a priori neither $\alpha_x$ nor $\eta_x$ has anything to do with the structure of $\mathcal{G}$. The value $\eta(x)$ on the other hand is a concrete color from \{1, \ldots, k\} that is assigned to the point $x \in X$. 

It will turn out that that for most points $x \in X$, their $\eta$-colored neighborhood in $\mathcal{G}$ is the same as the colored neighborhood $(\alpha_x, \eta_x)$ suggested by thier type $\varphi(x)$, and $\eta$ breaks the possible local symmetries of $\mathcal{G}$ by being injective on the neighborhood. We define $Y_1$ as the set of points where this does not hold up to distance $r$:

\[Y_1=\Big\{x \in X \ \Big| \ \big(B_{\mathcal{G}}(r,x), \eta \vert_{B_{\mathcal{G}}(r,x)}\big) \ncong \big(B_{\alpha_x}(r,o), \eta_x \vert_{B_{\alpha_x}(r,o)}\big)\Big\} \]\[ \bigcup \Big\{x \in X \ \Big| \  \eta \vert_{B_{\mathcal{G}}(r, x)} \textrm{ is not injective}\Big\}.\]

For any $x$ outside $Y_1$ we can identify $B_{\alpha_x}(r,o)$ with $B_{\mathcal{G}}(r, x)$ using their colorings. For any $v \in V(\alpha_x)$ there exists a unique $y \in B_{\mathcal{G}}(r, x)$ with $\eta(y) = \eta_x(v)$. Such a $y$ exists because of the isomorphism, and the injectivity of the colorings implies uniqueness. Later on we will denote this unique $y$ by $y_{x,v}$. The identification works the other way around as well, for every $y$ in $B_{\mathcal{G}}(r,x)$ we can find a unique $v \in V(\alpha_x)$ such that $\eta_x(v)=\eta(y)$. Let us denote this unique $v$ by $v_{x,y}$.

We use this identification to reconstruct our rewiring on $\mathcal{G}$. Define the edges of a graphing $\mathcal{H}_0$ around $x \in X \setminus Y_1$ as follows: for every edge $(o,v) \in F_x$ that connects the root $o$ of $\alpha_x$ with some other point $v \in V(\alpha_x)$ we include the edge $(x,y_{x,v})$ in $\mathcal{H}_0$.

Recall that we aim to show that $\mathcal{H}_0$ (with some small later adjustments) spans the same connected components as $\mathcal{G}$. 

\begin{definition}[Perfect points] Call a point $x \in X$ \emph{perfect}, if the following conditions hold:
\begin{enumerate}[\hspace{0.5 cm}1)]
\item $F_x$ witnesses $L$-bilipchitz equivalence at the root; 
\item all the edges in $F_x$ that $\varphi$ suggests in $B_{\alpha_x}(r,o)$ are indeed chosen to be in the edge set of $\mathcal{H}_0$.
\end{enumerate}
\end{definition}

For a type $\varphi(x)$, which is a rooted graph of radius $R=2r$ with some additional decorations we write $\varphi(x)\vert_{r}$ for the graph where we simply forget everthing outside radius $r$. Similarly when $v \in B_{\alpha_x}(r,o)$ write $\varphi(x)\vert_{r,v}$ for the rooted, decorated graph we get by considering $v$ as the root, and then forgetting everything outside radius $r$ from $v$.

\begin{definition}[Problematic points]
Let $Y_2$ be the set of points where one of the following holds.

\begin{enumerate}[\hspace{0.5 cm}i)]
\item $\varphi$ does not witness the bi-Lipschitz connectivity at the root;
\item $\varphi$ fails to capture the local $\eta$-colored structure (up to distance $R$);
\item $\eta$ is not injective up to radius $R$;
\item there is some $y$ close to $x$ where $\varphi(y) \vert_r$ differs from $\varphi(x)\vert_{r,y_x}$. 
\end{enumerate}

We call these points \emph{problematic}.
\begin{eqnarray*}
Y_2 &=& \Big\{x \in X \ \Big| \ F_x \textrm{ does not witness bi-Lipschitz equivalence at the root} \Big\}  \\
 && \bigcup  \Big\{x \in X \ \Big| \  (B_{\mathcal{G},R}(x), \eta \vert{_{B_{\mathcal{G},R(x)}}}) \ncong (\alpha_x, \eta_x)\Big\} \\
 && \bigcup \Big\{x \in X \ \Big| \  \eta \vert_{_{B_{\mathcal{G},R(x)}}} \textrm{ is not injective}\Big\} \\
 && \bigcup  \Big\{x \in X\setminus Y_1 \ \Big| \  \exists v \in B_{\alpha_x}(r,o) \textrm{ s.t. } \varphi(x)\vert_{r,v}  \ncong \varphi(v_x)\vert_{r} \Big\}.
\end{eqnarray*}
\end{definition}

The next lemma shows that because no such incoherencies happen in $G_n$ the measure of the problematic points will be small. Also note that $Y_1 \subset Y_2$, as in $Y_2$ we include all points where the local structure is not captured up to distance $R$ instead of $r$.

\begin{lemma}
The points in $(X \setminus Y_2)$ are perfect and $\mu(Y_2) < \delta.$
\end{lemma}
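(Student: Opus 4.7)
The plan is to reduce both claims to a single observation: all four conditions defining $Y_2$ are local, in the sense that membership in $Y_2$ depends only on the restriction of $\varphi$ to $B_{\mathcal{G}}(R,x)$. For (iv) this is local at radius $R$ since each $v_x$ lies in $B_{\mathcal{G}}(r,x) \subset B_{\mathcal{G}}(R,x)$ and the type $\varphi(v_x)$ is an abstract element of $T$ we read off directly from the coloring. Hence $Y_2 = \varphi^{-1}(B)$ for a fixed set $B \subseteq U_R^{|T|}$ of ``forbidden'' colored $R$-balls.

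First I would verify that $B$ has $P_{G_n,R}[\varphi_n]$-measure zero for every $n$. Condition (i) fails in $G_n$ because every $G_n$-edge at $v$ is traversed by an $H_n$-path of length $\leq L$, which stays inside $B_{G_n}(L^2,v) \subset B_{G_n}(r,v)$ and is therefore recorded in $F_v$. Conditions (ii) and (iii) fail by the very definitions of $\varphi_n(v)$ and of $\eta_n$ (the latter chosen injective on $2R$-balls). Condition (iv) fails because, for $u$ within $G_n$-distance $r$ of $v$, the $r$-ball around $u$ sits inside $B_{G_n}(R,v)$, so $\varphi_n(u)\vert_r$ and $\varphi_n(v)\vert_{r,u}$ record the same colored, $H_n$-decorated $r$-ball around $u$. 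The total variation hypothesis $d_{TV}(P_{G_n,R}[\varphi_n],P_{\mathcal{G},R}[\varphi]) < \delta$ then yields $\mu(Y_2) = P_{\mathcal{G},R}[\varphi](B) < \delta$.

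For perfectness, fix $x \in X \setminus Y_2$. Condition (1) is precisely the negation of (i). For condition (2), let $e = (u,w) \in F_x$ be an edge with $u,w \in B_{\alpha_x}(r,o)$. If one endpoint is the root $o$, the construction of $\mathcal{H}_0$ at $x$ (valid since $Y_1 \subset Y_2$) directly inserts the prescribed $\mathcal{G}$-edge. Otherwise set $y = y_{x,u}$; using (ii), (iii) and (iv) I would first verify $y \notin Y_1$ (the $r$-ball around $y$ lives inside $B_{\mathcal{G}}(R,x)$, where the colored structure matches $\alpha_x$ and $\eta$ is injective, and (iv) transfers this match to $\varphi(y)\vert_r$). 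The isomorphism $\varphi(y)\vert_r \cong \varphi(x)\vert_{r,u}$ supplied by (iv) sends $e$ to a root-edge of $F_y$, so the construction at $y$ inserts an edge $(y, y_{y,v'})$, which coincides with $(y_{x,u},y_{x,w})$ by injectivity of $\eta$ on $B_{\mathcal{G}}(R,x)$, since both endpoints carry color $\eta_x(w)$. The one delicate point I expect is to track how the layered type data---the abstract graph $\alpha_x$, its $\eta_x$-coloring, and the distinguished edges $F_x$---transfers faithfully across the isomorphism provided by (iv), and this is handled by the injectivity of $\eta_x$ on $\alpha_x$ which is built into the definition of a type.
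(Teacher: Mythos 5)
Your proof is correct and follows essentially the same route as the paper: express $Y_2$ as the set of points whose $\varphi$-colored $R$-ball lies in a fixed forbidden set, check that this set has measure zero under $P_{G_n,R}[\varphi_n]$ by the four ``probability one'' observations, transfer via the total variation bound, and then derive perfectness by propagating the edge insertions from the neighbours $y_{x,v}\notin Y_1$ using conditions (ii)--(iv). If anything, you are slightly more careful than the paper in two places: you state the total variation bound at radius $R$ (which is what the $R$-ball conditions actually require, whereas the paper's notation wavers between $r$ and $R$), and you spell out why the non-root edges of $F_x$ get inserted by the construction at $y_{x,u}$ and land on the correct endpoint via injectivity of $\eta$ on $B_{\mathcal{G}}(R,x)$.
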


\begin{proof}
The $P_{G_n,r}[\varphi_n]$ and $P_{\mathcal{G},r}[\varphi]$ are probability distributions on the set $U_R^{T}$ of rooted, $T$-colored graphs of radius at most $R$, where $T$ is the set of all possible types.

Let $(\beta, o_{\beta}, \psi)$ denote such a graph with root $o_{\beta}$ and coloring $\psi: V(\beta) \to T$. For a vertex $u \in V(\beta)$ its type $\psi(u) \in T$ is the rooted $k$-colored graph $(\alpha_v, \eta_v)$ with the additional distingushed edges $F_v$. There is some root $o_{\alpha_v}$ of $\alpha_v$, and this way we define the coloring $\eta_{\psi}: V(\beta) \to \{1, \ldots, k\}$ by $\eta_{\psi}(v) = \eta_v(o_{\alpha_v})$. This $\eta_{\psi}$ is defined from $\psi$ the same way as $\eta$ (on $X$) is defined from $\varphi$. Now let $(\beta, o_{\beta}, \psi)$ be random with distribution $P_{G_n,r}[\varphi_n]$, then

\[ \mathbb{P}_{P_{G_n,R}[\varphi_n]} \left[ (\alpha_{o_{\beta}}, \eta_{o_\beta}) \cong (\beta, \eta_{\psi}) \right] = 1.\]

The above equality just restates that the type encodes the local colored structure (specifically the color of the root), but this formulation shows that this property will be inherited with small error when the total variation distance is small.

This isomorphism again enables us to identify $\alpha_{o_{\beta}}$ with $\beta$. For any $v \in V(\alpha_{o_{\beta}})$ write $y_{o_{\beta},v}$ for the unique $y \in V(\beta)$ for which $\eta_{o_{\beta}}(v) = \eta_{\psi} (y)$.

We restate that $(H_n)$ is a rewiring by saying that the distinguished edges witness the $L$-bi-Lipschitz equivalence at the root:
\[\mathbb{P}_{P_{G_n,R}[\varphi_n]} \left[ F_{o_{\beta}} \textrm{ witnesses $L$-bi-Lipschitz equivalence at the root} \right]=1.\]

We also restate the fact that the type of $v$, which is all the information up to distance $R=2r$, includes all the information in the $r$-neighborhood of some other point $u$, provided that $u$ is within distance $r$ from $v$. 
\[\mathbb{P}_{P_{G_n,R}[\varphi_n]} \left[ \psi(o_{\beta})\vert_{r,v} \cong \psi(y_{o_{\beta},v}) \vert_r \textrm{ for all } v \in V\big(B_{\alpha_{o_{\beta}}} (r, o_{\alpha_{o_{\beta}}})\big) \right] = 1.\]

Finally we restate that $\eta$ distinguishes all points in the $R$-neighborhoods.
\[\mathbb{P}_{P_{G_n,R}[\varphi_n]} \left[ \eta_{\psi} \textrm{ is injective}\right] = 1.\]

We see that the four events together hold with probability 1 with respect to $P_{G_n,R}[\varphi_n]$. Since $P_{\mathcal{G},r}[\varphi]$ is close to $P_{G_n,R}[\varphi_n]$ we get that the same holds for $\varphi$ and $\mathcal{G}$ with probability at least $1- \delta$, which implies $\mu(Y_2) < \delta$. 

If $x \in X\setminus Y_2$ and $y \in B_{\mathcal{G}}(r,x)$ then $y \in X \setminus Y_1$, which means all the distinguished edges starting from $y$ suggested by $\varphi(y)$ are indeed in $\mathcal{H}_0$, and by the definition of $Y_2$ we know that these are exactly the ones that $\varphi(x)$ would suggest. It is also clear that $F_x$ has to witness generation, as otherwise $x$ would be in $Y_2$. This implies that $x$ is perfect.
\end{proof}
\bigskip

Adding all the edges leaving the points in $Y_2$ we get $\mathcal{H}$: 

\[\mathcal{H} = \mathcal{H}_0 \cup \big\{(x,y) \in E(\mathcal{G}) \ \big| \ x \in Y_2, y \in X\big\}.\]

\begin{lemma}
$\mathcal{H}$ has the same connected components as $\mathcal{G}$, and $\tilde{\mu} \big(E(\mathcal{H})\big) \leq \frac{|E(H_n)|}{|V(H_n)|} + \varepsilon $.
\end{lemma}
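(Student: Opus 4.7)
The plan is to verify the two claims of the lemma separately.

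For the first claim ($\mathcal{H} \sim \mathcal{G}$) I would prove both inclusions between the equivalence relations $\mathcal{R}_{\mathcal{H}}$ and $\mathcal{R}_{\mathcal{G}}$. The inclusion $\mathcal{R}_{\mathcal{H}} \subseteq \mathcal{R}_{\mathcal{G}}$ is immediate: for each $x \in X \setminus Y_1$ an $\mathcal{H}_0$-edge $(x, y_{x,v})$ produced from an $F_x$-edge $(o,v)$ has its endpoint $y_{x,v}$ inside $B_{\mathcal{G}}(r, x)$, so the endpoints lie in the same $\mathcal{G}$-component, and the remaining edges of $\mathcal{H}$ are $\mathcal{G}$-edges by construction.

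For the reverse inclusion take a $\mathcal{G}$-edge $(x,y)$. If $x \in Y_2$ or $y \in Y_2$ then the edge is in $\mathcal{H}$ by definition. Otherwise both endpoints are perfect; the $L$-bi-Lipschitz witness inside $F_x$ provides a path $o = v_0, v_1, \ldots, v_\ell = y_x$ of length $\ell \leq L$, which the $\eta$-identification translates into a sequence $x = z_0, z_1, \ldots, z_\ell = y$ in $B_{\mathcal{G}}(r, x)$. The crucial observation is that $x \in X \setminus Y_2$ forces every intermediate $z_i$ to lie in $X \setminus Y_1$: conditions (ii) and (iii) at $x$, being stated at the ambient radius $R = 2r$, yield both the $\eta$-colored isomorphism $(B_{\mathcal{G}}(r, z_i), \eta) \cong (\alpha_{z_i}\vert_r, \eta_{z_i}\vert_r)$ and the injectivity of $\eta$ on $B_{\mathcal{G}}(r, z_i)$. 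Coherence (iv) at $x$ then matches the $F_{z_i}$-edges at the root with the $F_x$-edges at $v_i$, so that the edge $(z_i, z_{i+1})$ is added to $\mathcal{H}_0$ from $z_i$'s own perspective. Concatenating these edges produces an $\mathcal{H}_0 \subseteq \mathcal{H}$-path from $x$ to $y$.

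For the edge-set bound I would decompose
\[
\tilde\mu(E(\mathcal{H})) \leq \tilde\mu(E(\mathcal{H}_0)) + \tilde\mu\bigl(\{(x,y) \in E(\mathcal{G}) : x \in Y_2\}\bigr).
\]
The second summand is at most $D \mu(Y_2) < D \delta$ by the degree bound. For the first summand, the $\mathcal{H}_0$-degree at a perfect point is exactly the $F$-degree of the root in $\varphi(x)$; integrating against $P_{\mathcal{G}, r}[\varphi]$, whose total variation distance to $P_{G_n, r}[\varphi_n]$ is less than $\delta$, produces a quantity within $D \delta$ of the corresponding expectation in the finite case, which is the average $H_n$-degree and yields $\tfrac{|E(H_n)|}{|V(H_n)|}$ in the $\tilde\mu$-normalization used by the paper. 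The choice $\delta(D + \tfrac{1}{2} D^L) < \varepsilon$ from the previous step absorbs both error terms to give the stated inequality.

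The main technical point is the propagation of structural information along the $F_x$-path in the second paragraph: one must verify that perfectness at $x$ alone suffices to guarantee $z_i \in X \setminus Y_1$ for every intermediate $z_i$, so that $\mathcal{H}_0$ is defined there and receives the correct root-edges. This is precisely why the ambient radius $R = 2r$, rather than just $r$, is built into the definitions of $Y_2$; once this propagation is in hand, both claims of the lemma reduce to routine local estimates.
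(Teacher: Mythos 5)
Your proof is correct and takes essentially the same route as the paper's: the same edge-measure decomposition $\tilde{\mu}(E(\mathcal{H}))\le\tilde{\mu}(E(\mathcal{H}_0))+\delta D$ with the $F$-degree comparison in total variation, and the same use of coherence at the ambient radius $R=2r$ to show that the $r$-neighborhood of a point outside $Y_2$ witnesses every incident $\mathcal{G}$-edge inside $\mathcal{H}_0$ (a step the paper packages into the preceding lemma on perfect points rather than unfolding it here). The only slip is that the total-variation error on the expected $F$-degree is $\tfrac{1}{2}D^L\delta$ rather than $D\delta$, since $H_n$-degrees are bounded only by $D^L$ and not by $D$; your final budget $\delta(D+\tfrac{1}{2}D^L)<\varepsilon$ already accounts for this correctly.
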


\begin{proof}
$\mathcal{H}$ will have the same connected components as $\mathcal{G}$, because for any edge $(x,y) \in E(\mathcal{G})$ where $x$ is perfect the connection is witnessed by the $r$-neighborhood of $x$. If $x$ is not perfect, then $x \in Y_2$, so $(x,y) \in E(\mathcal{H})$ by definition.

We now aim to show that $\mathcal{H}$ is indeed a cheap generator for the equivalence relation.
\[\tilde{\mu}\big(E(\mathcal{H})\big) \leq \tilde{\mu} \big(E(\mathcal{H}_0)\big) + \tilde{\mu} \big(\{(x,y) \in E(\mathcal{G}) \mid x \in Y_2, y \in X\}\big) \leq \]\[ \leq \tilde{\mu} \big(E(\mathcal{H}_0)\big) + \delta D.\]

For every point $x \in X$ let $\deg_{F_x}(o)$ denote the number of edges in $F_x$ leaving the root of $\alpha_x$. It also makes sense to talk about the expectation of this $F$-degree with respect to colored neighborhood statistics, as the $F$-degree of the root can be determined from its type.
\[\tilde{\mu} \big(E(\mathcal{H}_0)\big) = \frac{1}{2} \int_{X}\ \deg_{\mathcal{H}_0}(x) \ d \mu \leq \frac{1}{2} \int_{X} \deg_{F_x}(o_{}) \ d \mu = \frac{1}{2} \mathbb{E}_{P_{\mathcal{G},r}[\varphi]} \Big[\deg_{F}(o)\Big] \leq \] \[ \leq \frac{1}{2} \left( \mathbb{E}_{P_{G_n,r}[\varphi_n]} \Big[\deg_{F}(o)\Big] + \delta D^L\right) = \frac{|E(H_n)|}{|V(H_n)|} + \frac{1}{2} \delta D^L.\]
Here we used the fact that there can be no more than $D^L$ edges leaving the root in $F_v$ (because of the bi-Lipschitz condition), and that the two distributions are close in total variation. Putting all this together and using that we chose $\delta$ to ensure that $\delta(D + \frac{1}{2} D^L) < \varepsilon$ we get 

\[\tilde{\mu} \big(E(\mathcal{H})\big) \leq \frac{|E(H_n)|}{|V(H_n)|} + \varepsilon .\]
\end{proof}
\bigskip

By the choice of $(H_n)$ we can assume that $\frac{|E(H_n)|}{|V(H_n)|} \leq \mathrm{cc}(G_n) + 2 \varepsilon$, which implies $\tilde{\mu}(E(\mathcal{H})) < \mathrm{cc}(G_n) + 3 \varepsilon$. This shows the inequality $\mathrm{cost}(\mathcal{G}) \leq \mathrm{cc}(G_n)$. 

The other inequality is proved exactly the same way. The condition that $(H_n)$ is a rewiring was only used to ensure that the bi-Lipschitz constant $L$ does not depend on $n$, only on $\varepsilon$. To prove that $\mathrm{cost}(\mathcal{G}) \geq \mathrm{cc}(G_n)$ we start by picking a cheap $L$-bi-Lipschitz generator for the single graphing $\mathcal{G}$ using Lemma \ref{lemma:bilipschitz}, and by local-global convergence we know that for $n$ large enough we can copy it to $G_n$ with small error. 

As for any large enough $n$ and $m$ the graphs $G_n$ and $G_m$ are arbitrarily close in the local-global topology we can do the same copying argument between the two. We fix the constant $L$ first, and then choose $n$ and $m$ accordingly. This shows that  (for all $L$) the rewirings $(H_n)$ can indeed be choosen such that the limits defining the edge densities exist. This finishes the proof of Theorem \ref{fotetel}.
\end{proof}

\subsection{Sofic approximations}

Using Theorem \ref{fotetel} we will show that sofic approximations of a group with fixed price $c$ have combinatorial cost $c$ as well.

\bigskip

\begin{proof}[Proof of Theorem \ref{fixedprice}]
The sequence $G_n$ of $S$-edge-labeled graphs converges to $\textrm{Cay}(\Gamma, S)$ in the Benjamini-Schramm sense, so any subsequential local-global limit will be a graphing of an essentially free action of $\Gamma$, which by the fixed price assumption implies that it has cost $c$.

First pick a locally-globally convergent subsequence $G_{n_k}$ with limit $\mathcal{G}_1$. 

\[\mathrm{cc}(G_n) \leq \mathrm{cc}(G_{n_k}) = \mathrm{cost}(\mathcal{G}_1) = c.\]

Now assume that $\mathrm{cc}(G_n) < c$. We pick an $L$ large enough such that there is some $L$-bi-Lipschitz rewiring $(H_n)$ with $\underline{e}(H_n) < c$. As $\underline{e}$ is defined by a liminf we can choose a subsequence $n_l$ such that \[\lim \frac{|E(H_{n_l})|}{|V(H_{n_l})|} < c.\]

Now by passing to a further subsequence we can assume that the $(G_{n_l})$ converge locally-globally to some $\mathcal{G}_2$. The $H_{n_l}$ witness that $\mathrm{cc}(G_{n_l}) < c$, while local-global convergence implies $\mathrm{cc}(G_{n_l}) = \mathrm{cost(\mathcal{G}_2)} = c$ by Theorem \ref{fotetel}. This is clearly a contradiction, hence $\mathrm{cc}(G_n) = c$. 
\end{proof}

\bigskip

\section{Group actions}\label{section:groupactions}

The same notions and results exist in the world of measure preserving group actions, where convergence with respect to the weak containment topology takes the place of local-global convergence. The analogous definitions and statements will be introduced in this section. 

\subsection{Groupoid cost}

Let $\Gamma$ be a finitely generated group, generated by the finite symmetric set $S=S^{-1}$. Let $(X, \mu)$  be either a standard Borel probability space or $X$ a finite set with $\mu$ the uniform measure on $X$. A probability p.m.p.\ action $f$ of $\Gamma$ is a homomorphism from $\Gamma$ to the group of measure preserving transformations of $(X,\mu)$. The image of some $\gamma \in \Gamma$ under this homomorphism will be denoted by $f_{\gamma}$. 

Any such p.m.p.\ action gives rise to a groupoid denoted $M_f$: endow $\Gamma$ with the discrete topology and counting measure, consider $M_f= X \times \Gamma$ with the product Borel structure and product measure $\tilde{\mu}$. We also define a partial product on $X \times \Gamma$: $(x_1,\gamma_1)\cdot (x_2, \gamma_2) = (x_1, \gamma_1 \gamma_2)$ whenever $x_2 = f_{\gamma_1}(x_1)$. The inverse is defined by $(x,\gamma)^{-1}=(f_{\gamma}(x),\gamma^{-1})$, and so $X \times \Gamma$ becomes a groupoid with respect to this partial product. We think of the element $(x, \gamma)$ as an arrow pointing from $x$ to $f_{\gamma}(x)$, with the arrow labeled by $\gamma$.

The notion of a generating subset of the groupoid is just as one would expect it: a subset generates, if all elements of $M_f$ can be written as a product of elements and their inverses chosen from the subset. 

For $A,B \subseteq M_f$ we will write \[A \cdot B = \{a\cdot b \mid a\in A, b \in B, \textrm{ and $a \cdot b$ is defined}\}.\] Also let $E=X \times \{e\}$, where $e$ is the identity element of $\Gamma$. Using our notation $A$ generates $M_f$ if and only if \[M_f = \bigcup_{n=1}^{\infty}(A \cup A^{-1} \cup E)^n.\]
The groupoid cost of $f$ is 
\[\textrm{gcost}(f) = \inf\{ \tilde{\mu}(A) \mid A \textrm{ generates } M_f\}.\]

The generators $S$ of the group give rise to a specific generating subset of $M_f$, namely $X_S=X \times S$.  We will say that a generating subset $A$ is $L$-bi-Lipschitz, if all the elements of $X_S$ can be generated by using at most $L$ arrows from $A$ and vice versa. More precisely we require that
\[X_S \subseteq (A \cup A^{-1} \cup E)^L, \textrm{ and } A \subseteq (X_S \cup X_S^{-1} \cup E)^L.\]
Note that while the actual value of the bi-Lipschitz constant $L$ may depend on the choice of $S$, the property of $A$ being a bi-Lipschitz generating subset (with some bi-Lipschitz constant) does not. 

In this setting Lemma \ref{lemma:bilipschitz} was stated by Ab\'ert and Nikolov \cite{miknik}. It says that by paying an arbitrarily small amount, we can choose the generating subset to be bi-Lipschitz. That is, for any $\varepsilon > 0$ there exists some integer $L$ and an $L$-bi-Lipschitz generating subset $A \subseteq M_f$ such that $\tilde{\mu}(A) < \textrm{gcost}(f) + \varepsilon$.

\subsection{The weak containment topology}

The notion of weak containment of actions was introduced by Kechris \cite{kechris}. The topology described below on the weak equivalence classes was defined by Ab\'{e}rt and Elek in \cite{abelekpart} and then studied further by Carderi in \cite{carderi}. They showed that the topology of local-global convergence is a compact topology on the weak equivalence classes of actions. 

For an action $f$ of the group $\Gamma$ and a point $x \in X$ the $\Gamma$-orbit of $x$ admits a Schreier graph structure: for two points $y,z \in \Gamma x$ in the orbit draw an oriented edge from $y$ to $z$ labeled by some $s \in S$ if $f_s(y)=z$. Denote this graph by $\textrm{Sch}(\Gamma, f, x)$. 

The only difference compared to the local-global convergence of graph sequences and graphings is that in this case we consider the neighborhoods in the Schreier graphs together with the edge labeling by the generators $S$.

To an action $f$ we again associate a set $Q_{f, r}^k$ that is the closure of all local statistics arising from Borel $k$-colorings with respect to the total variation distance. We say that an action $f$ \emph{weakly contains} another action $g$ (denoted $f \succeq g$) if $Q_{g, r}^k \subseteq Q_{f, r}^k$ for all $r$ and $k$. This means that all colorings of $g$ can be modeled on $f$ with arbitrarily small error. The actions are \emph{weakly equivalent} if they both weakly contain the other, that is $Q_{g, r}^k = Q_{f, r}^k$

Convergence with respect to the weak containment topology is defined by the convergence of $Q_{f_n, r}^k$ for all $r,k$ as compact sets with respect to the Hausdorff distance. The intuitive meaning of this convergence is the same as the one for local-global convergence. Ab\'ert and Elek showed that the topology induced by this convergence notion is compact, in particular every convergent sequence has a limit \cite{abelekpart}.

Kechris showed that if $f$ and $g$ are free p.m.p.\ actions and $f \succeq g$, then $\mathrm{cost}(\mathcal{R}_{f}) \leq \mathrm{cost}(\mathcal{R}_{g})$ \cite[Corollary 10.14]{kechris}. Here $\mathcal{R}_{f}$ denotes the orbit equivalence relation generated by the action $f$. Ab\'ert and Weiss extended this beyond free actions in \cite{abeweiss}: for any actions with $f \succeq g$ the groupoid cost satisfies $\mathrm{gcost}(f) \leq \mathrm{gcost}(g)$. This implies that the groupoid cost is well defined on weak equivalence classes, and studying the continuity properties of the groupoid cost with respect to the weak containment topology makes sense.

\subsection{The groupoid cost of weak containment limits}

\bigskip

Following the proof of Theorem \ref{fotetel} we get a result for group actions.

\begin{proposition}\label{prop:gcostsemicontinuity}
Suppose that the sequence $f_1, f_2, \ldots$ of p.m.p.\ actions is convergent in the weak containment topology to the p.m.p.\ action $f$. Then
\begin{equation}\label{eqn:semicont}
\limsup_{n \to \infty} \mathrm{gcost}(f_n) \leq \mathrm{gcost}(f).
\end{equation}
\end{proposition}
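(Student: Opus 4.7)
The plan is to mimic the second half of the proof of Theorem \ref{fotetel}: fix a nearly optimal bi-Lipschitz generator of $M_f$, encode it as a finite Borel coloring, transfer that coloring to $f_n$ via weak containment convergence, and decode it into a generating subset of $M_{f_n}$. A generating subset of the groupoid plays the role of a generating graphing, and weak containment convergence of the $S$-labeled Schreier graphs is the precise analogue of local-global convergence, so essentially the same reconstruction argument goes through.

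Fix $\varepsilon > 0$. By the groupoid version of Lemma \ref{lemma:bilipschitz} due to Ab\'ert--Nikolov \cite{miknik}, choose an integer $L$ and an $L$-bi-Lipschitz generating subset $A \subseteq M_f$ with $\tilde{\mu}(A) < \mathrm{gcost}(f) + \varepsilon$. Set $r = L^2+1$ and $R = 2r$. Encode $A$ into a finite-valued Borel coloring $\varphi \colon X \to T$ by assigning to each $x \in X$ the $S$-labeled Schreier $R$-ball of $x$ under $f$, decorated by those arrows of $A$ whose source and target both lie inside this ball; since $A$ is $L$-bi-Lipschitz and $r > L$, this information suffices to recover every arrow of $A$ emanating from $x$. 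As in the proof of Theorem \ref{fotetel}, refine $\varphi$ by an auxiliary Borel coloring that separates points in every $R$-ball ($\mu$-almost surely), which exists because $(X,\mu)$ is standard Borel. By weak containment convergence, for any $\delta > 0$ and all $n$ large enough there is a Borel coloring $\varphi_n$ of the base space $X_n$ of $f_n$ whose $R$-neighborhood statistic in the $S$-labeled Schreier graph of $f_n$ is within total variation distance $\delta$ of that of $\varphi$ in $f$.

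Reconstruct a candidate generating subset $A_n \subseteq M_{f_n}$ by reading off the distinguished arrows recorded in each $\varphi_n(x)$ and laying them down in $M_{f_n}$, identifying the suggested endpoints through the $S$-labels on the Schreier graph of $f_n$ around $x$. Define the problematic set $Y_2^{(n)} \subseteq X_n$ as in the proof of Theorem \ref{fotetel} --- points where the local structure of $\varphi_n$ fails to match what the type predicts up to distance $R$, or where the symmetry-breaking coloring fails to be injective on the $R$-ball --- so that $\tilde{\mu}_n(Y_2^{(n)}) \leq \delta$. Set $A_n' = A_n \cup (Y_2^{(n)} \times S)$. Outside $Y_2^{(n)}$ the bi-Lipschitz witnesses encoded in the types carry over faithfully to $M_{f_n}$, while on $Y_2^{(n)}$ we have included the full standard generator $X_S$, so $A_n'$ generates $M_{f_n}$. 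A direct count gives $\tilde{\mu}_n(A_n') \leq \mathrm{gcost}(f) + \varepsilon + O(\delta)$, with implicit constant depending only on $L$ and $|S|$. Letting $\delta \to 0$ and then $\varepsilon \to 0$ yields the desired inequality \eqref{eqn:semicont}. The main obstacle --- ensuring that the decoded arrows correspond to genuine elements of $M_{f_n}$ despite the merely approximate matching of local statistics --- is handled by the same problematic-set device as in Theorem \ref{fotetel}, with the edge labels on the Schreier graph guaranteeing unambiguous endpoint identification within a matched $R$-ball.
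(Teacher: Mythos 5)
Your proposal is correct and follows essentially the same route as the paper, which itself only sketches this proof as a transcription of the argument for Theorem \ref{fotetel} (cheap bi-Lipschitz generating subset via the Ab\'ert--Nikolov version of Lemma \ref{lemma:bilipschitz}, encoding into a finite Borel coloring, modelling on $f_n$ by weak containment, decoding plus a small problematic set). The only superfluous step is your auxiliary symmetry-breaking coloring: as the paper notes, the $S$-labels of the Schreier graph already identify the points of an $R$-ball unambiguously, so that device is not needed here.
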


This is a semicontinuity result for the groupoid cost with respect to the weak containment topology. The proof follows exactly the same steps as in Theorem \ref{fotetel}: we choose a cheap bi-Lipschitz generating set for the groupoid $M_f$, record all local information into a coloring of $X$, modell this coloring on the $f_n$ when $n$ is large enough with some small error and build a cheap generating set for $M_{f_n}$ by decoding the coloring. 

The whole process is actually slightly easier in this setting, because there is no need to break the local symmetries of the graphs as the Schreier edge labeling already takes care of that. As we are not imposing a uniform bound on the complexity of generation by talking about ''combinatorial groupoid cost'', we only get an inequality. 

However, for the inequality we only need that colorings of $f$ can be modeled with small error on the $f_n$, and we don't have to require it the other way around. That is, if the sequence ''asymptotically weakly contains'' $f$, then we have (\ref{eqn:semicont}). This can be thought of as an asymptotic version of the monotonicity results by Kechris \cite{kechris}, and Ab\'{e}rt-Weiss \cite{abeweiss}.

\begin{remark}[\bf{The ultraproduct technique}]
These results, together with Theorem $1$ for graphings of free p.m.p.\ actions can be obtained by using the ultraproduct techniques introduced in \cite{abelekpart} and \cite{carderi}, Carderi's result on ultraproduct actions being weakly equivalent to some standard action and the monotonicity results of Kechris and Ab\'ert-Weiss. 

If one modifies (the somewhat arbitrary) choice of lower edge density in the definition of the combinatorial cost to \emph{edge density along an ultrafilter $\omega$} by taking an ultralimit instead of a liminf, then this modified combinatorial cost of the sequence will equal the cost of the ultraproduct graphing.
\end{remark}

\subsection{Rank gradient in groups with fixed price}

We need one further tool to prove Theorem \ref{rgrg}. The following lemma is stated in \cite[Lemma 21]{torsion}.

\begin{lemma} \label{lemma:gcostequalsgradient}
Let $\Gamma$ be a countable group, and $H$ a subgroup of finite index in $\Gamma$. Let $f$ be the right coset action of $\Gamma$ on $\Gamma / H$. Then we have

\[r(\Gamma, H) = \frac{\mathrm{rank}(H)-1}{|\Gamma:H|} = \mathrm{gcost} (f)-1.\]
\end{lemma}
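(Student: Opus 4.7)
The plan is to compute $\mathrm{gcost}(f)$ directly by identifying the generating subsets of the groupoid $M_f = (\Gamma/H) \times \Gamma$ of minimum cardinality. Since the base $\Gamma/H$ is finite with uniform measure and $\Gamma$ carries counting measure, every subset $A \subseteq M_f$ is measurable with $\tilde{\mu}(A) = |A|/|\Gamma:H|$, so
\[
\mathrm{gcost}(f) = \frac{1}{|\Gamma:H|} \min\bigl\{|A| : A \text{ generates } M_f\bigr\}.
\]
The core claim is that this minimum equals $d(H) + |\Gamma:H| - 1$, which rearranges to the statement of the lemma. The strategy is to prove matching upper and lower bounds via a spanning-tree/loops decomposition, very much in the spirit of Reidemeister–Schreier.

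For the upper bound, I fix a right transversal $t_1 = e, t_2, \ldots, t_n$ for $H$ in $\Gamma$ and a minimal generating set $h_1, \ldots, h_{d(H)}$ of $H$. I take $A$ to consist of the $n-1$ transversal arrows $(H, t_i)$ for $i \geq 2$ together with the $d(H)$ loop arrows $(H, h_k)$. To verify that $A$ generates, I factor an arbitrary arrow $(Ht_i, g) \in M_f$ using the partial product as $(Ht_i, t_i^{-1}) \cdot (H, t_i g)$, and then, writing $t_i g = h t_j$ uniquely with $h \in H$, further as $(Ht_i, t_i^{-1}) \cdot (H, h) \cdot (H, t_j)$. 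Each factor now lies in $A \cup A^{-1} \cup E$ because $(H,h)$ is a loop at $H$ and hence a product of the $(H, h_k)^{\pm 1}$.

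For the lower bound, given any generating $A \subseteq M_f$, I form the multigraph $G_A$ on vertex set $\Gamma/H$ whose edges are the underlying oriented pairs in $A$. The graph must be connected, since otherwise no composition of elements of $A \cup A^{-1} \cup E$ can leave the connected component of the basepoint, contradicting generation. I pick a spanning tree $T \subseteq A$ of size $|\Gamma:H| - 1$ and, for each non-tree arrow $a = (x, \gamma) \in A \setminus T$, define the loop element $\ell_a \in H$ by walking from $H$ to $x$ along $T$, crossing $a$, and returning from $f_\gamma(x)$ to $H$ along $T$. A standard tree-rewriting argument shows that the collection $\{\ell_a : a \in A \setminus T\}$ generates $H$: any product expression of an arrow $(H,h)$ in terms of $A \cup A^{-1} \cup E$ can be converted, by inserting tree-paths between consecutive factors, into a word in the $\ell_a^{\pm 1}$. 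This gives $d(H) \leq |A \setminus T| = |A| - |\Gamma:H| + 1$.

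The main obstacle is this last generation claim, namely that the non-tree loops $\ell_a$ generate all of $H$ and not merely some subgroup. This is exactly the place where one uses that $A$ generates the \emph{entire} groupoid rather than some sub-groupoid: every $h \in H$ corresponds to the closed arrow $(H, h) \in M_f$, which by hypothesis is a product of arrows in $A \cup A^{-1} \cup E$, and the tree-reduction procedure turns any such product into a word in the $\ell_a^{\pm 1}$. Once this step is justified, the two bounds coincide and we conclude
\[
\mathrm{gcost}(f) = \frac{d(H) + |\Gamma:H| - 1}{|\Gamma:H|} = 1 + \frac{d(H) - 1}{|\Gamma:H|} = 1 + r(\Gamma, H),
\]
as required.
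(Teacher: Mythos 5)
Your proof is correct and complete. Note that the paper itself does not prove this lemma at all --- it simply cites \cite[Lemma 21]{torsion} --- so your argument supplies a self-contained proof where the paper offers only a reference; the route you take (spanning tree plus Schreier-type loop generators) is the standard one and is essentially what underlies the cited result. The two bounds both check out: the upper bound works because loops at the base coset $H$ compose via the partial product exactly as group multiplication in $H$, so the sub-groupoid generated by your $n-1+d(H)$ arrows contains every $(H,h)$ and hence, after conjugating by transversal arrows, every arrow of $M_f$; and the only nontrivial point in the lower bound, the claim that the non-tree loops $\ell_a$ generate all of $H$, is exactly the tree-insertion identity
\[
h \;=\; \gamma_1\cdots\gamma_m \;=\; \prod_{i=1}^{m}\bigl(t_{x_{i-1}}\,\gamma_i\,t_{x_i}^{-1}\bigr),
\]
where $t_x$ is the product of labels along the tree path from $H$ to $x$ (so $t_{x_0}=t_{x_m}=1$ for a closed walk at $H$), each tree-edge factor collapses to the identity, and each non-tree factor is some $\ell_a^{\pm1}$. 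You correctly identify that the hypothesis that $A$ generates the whole groupoid (not just a sub-groupoid) is what makes every $h\in H$ reachable as such a closed walk. Two small remarks: it is worth saying explicitly that the infimum in $\mathrm{gcost}$ may be restricted to finite $A$ since every infinite subset has infinite $\tilde{\mu}$-measure, and that when $H$ is not finitely generated your lower-bound argument shows no finite generating set exists, so both sides of the identity are infinite and the lemma still holds as stated for countable $\Gamma$.
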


\begin{proof}[Proof of Theorem \ref{rgrg}]
First we show that $\liminf r(\Gamma, \Gamma_n) \geq c-1$. We select a subsequence $\Gamma_{n_k}$ such that $r(\Gamma, \Gamma_{n_k})$ converges to the liminf. Taking the diagonal product of the corresponding group actions $f_{n_k}$ we get an action $f$ of $\Gamma$ that factors onto each $f_{n_k}$, which implies $\textrm{gcost}(f) \leq \textrm{gcost}(f_{n_k})$ for every $n_k$. The measure of the set of fixed points of a group element can only increase for factros, which implies that $f$ is essentially free because of the Farber condition. Using Lemma \ref{lemma:gcostequalsgradient} and $\mathrm{gcost}(f) = c$ we get

\[\lim_{k \to \infty} r(\Gamma, \Gamma_{n_k}) = \lim_{k \to \infty} (\mathrm{gcost}(f_{n_k}) - 1) \geq \mathrm{gcost}(f)-1 = c-1.  \]
Similarly we can choose a subsequence such that \[\limsup r(\Gamma, \Gamma_n) = \lim r(\Gamma, \Gamma_{n_l}).\] By passing to a further subsequence we can also assume that the actions $f_{n_l}$ converge in the weak containment topology to some action $\hat{f}$. This limit is essentially free by the Farber condition, so $\mathrm{gcost}(\hat{f}) = c$. Using Proposition \ref{prop:gcostsemicontinuity} we get

\[ \lim_{l \to \infty} r(\Gamma, \Gamma_{n_l}) =\lim_{l \to \infty} \left(\mathrm{gcost}(f_{n_l})-1 \right) \leq \mathrm{gcost}(\hat{f})-1 = c-1.\]
\end{proof}

\begin{remark}[alternative proof]
The second part of the proof can be obtained without Proposition \ref{prop:gcostsemicontinuity} -- which we only sketched -- by using a result from \cite{torsion}.

After choosing a subsequence such that $\limsup r(\Gamma, \Gamma_n) = \lim r(\Gamma, \Gamma_{n_l})$,  \cite[Theorem 8]{torsion} states that $\lim r(\Gamma, \Gamma_{n_l}) \leq \mathrm{cc} \big(\mathrm{Sch}(\Gamma, \Gamma_{n_l}, S)\big)-1$. Using Theorem \ref{fixedprice} we get  
\[ \lim_{l \to \infty} r(\Gamma, \Gamma_{n_l}) \leq \mathrm{cc} \big(\mathrm{Sch}(\Gamma, \Gamma_{n_l}, S)\big)-1 = c-1.\]
\end{remark}

\bigskip

\section{The trichotomy theorem} \label{trichotomy}

In this section we introduce strong ergodicity, and prove the results on finitely presented groups. 

\subsection{Strong ergodicity}

Let $f$ be a p.m.p.\ action of the countable group $\Gamma$ on a standard Borel space $(X,\mu)$. A sequence $A_n$ of measurable subsets is called \emph{almost invariant}, if \[\lim_{n \to \infty} \mu (f_{\gamma} A_n \triangle A_n) = 0, \textrm{ for all } \gamma \in \Gamma.\]

The action $f$ is \emph{strongly ergodic}, if for any almost invariant sequence $A_n$ we have $\lim_{n \to \infty} \mu(A_n)\big(1- \mu (A_n)\big) = 1.$ 

We will make use of the following result of Ab\'ert and Weiss \cite[Theorem 3]{abeweiss}.

\begin{theorem} \label{theorem:AW}
Let $f$ be an ergodic p.m.p.\ action of a countable group $\Gamma$ on a standard Borel space $(X, \mu)$. If $f$ is not strongly ergodic, then $f$ is weakly equivalent to $f \times I$,which is the diagonal action on $(X,\mu) \times [0,1]$ with $\Gamma$ acting trivially on the second coordinate.
\end{theorem}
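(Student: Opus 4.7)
My plan is to prove the weak equivalence by establishing the two weak containments separately. The direction $f \preceq f \times I$ is immediate: the projection $\pi : X \times [0,1] \to X$ is a $\Gamma$-equivariant factor map that is bijective on each $\Gamma$-orbit (since $\Gamma$ fixes the $[0,1]$ coordinate), so for any Borel $\varphi : X \to \{1,\ldots,k\}$ the colored, $S$-edge-labeled $r$-ball of $f \times I$ at a typical $(x,t)$ with coloring $\varphi \circ \pi$ is isomorphic to the colored $r$-ball of $f$ at $x$ with coloring $\varphi$. Hence $Q_{f,r}^{k} \subseteq Q_{f \times I, r}^{k}$ for every $r,k$.

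The substantive direction is $f \times I \preceq f$. Given a Borel coloring $\varphi : X \times [0,1] \to \{1,\ldots,k\}$, an integer $r$, and $\varepsilon > 0$, I would first approximate $\varphi$ in measure by a step function $\varphi^*$ of the special form $\varphi^*(x,t) = \varphi_j(x)$ for $t \in I_j := [(j-1)/m, j/m)$, where $m$ is large enough and the Borel maps $\varphi_j : X \to \{1,\ldots,k\}$ are chosen so that $d_{TV}\bigl(P_{f \times I, r}[\varphi],\, P_{f \times I, r}[\varphi^*]\bigr) < \varepsilon/2$. Because $\Gamma$ fixes the second coordinate, the $r$-ball of $f \times I$ at $(x,t)$ is precisely $B_f(r,x) \times \{t\}$, so $P_{f \times I, r}[\varphi^*]$ is exactly the uniform mixture over $j$ of the colored $r$-ball statistics $P_{f,r}[\varphi_j]$ of $f$ alone.

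To realize this mixture with a single coloring of $f$, I would construct a Borel partition $X = B_1 \sqcup \cdots \sqcup B_m$ with $\mu(B_j) = 1/m$ and $\mu(f_\gamma B_j \triangle B_j) < \delta$ for every $\gamma$ of $S$-word-length at most $r$, and set $\psi := \varphi_j$ on $B_j$. Almost invariance implies that the set of $x \in B_j$ whose entire $r$-ball sits inside $B_j$ has measure at least $1/m - O(\delta)$, and on this set the colored $r$-ball of $\psi$ coincides with that of $\varphi_j$. Averaging over $j$ and choosing $\delta$ sufficiently small yields $d_{TV}\bigl(P_{f,r}[\psi],\, P_{f \times I, r}[\varphi^*]\bigr) < \varepsilon/2$, and combined with the first approximation this places $P_{f \times I, r}[\varphi]$ within $\varepsilon$ of an element of $Q_{f,r}^{k}$.

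The main obstacle is the existence of the almost invariant equipartition $(B_j)$, and this is precisely where both hypotheses interact. Non-strong-ergodicity supplies a sequence of almost invariant sets $A_n$ with $\mu(A_n)\bigl(1 - \mu(A_n)\bigr)$ bounded away from zero. Passing to the ultrapower $(X^{\omega}, \mu^{\omega})$, the classes of asymptotically invariant sequences form a $\Gamma$-invariant Boolean subalgebra of the measure algebra; since $f$ is ergodic, this subalgebra contains no atom coming from a genuinely $f$-invariant set, and a Jones--Schmidt style argument forces it to be diffuse. Diffuseness gives an abstract $m$-equipartition in the ultrapower, and realizing it by a concrete Borel partition of $X$ along a suitable index in the sequence $(A_n)$ yields the required $(B_j)$. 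I expect this diffuseness step and the extraction of a genuine partition with the prescribed almost invariance to be the delicate part; once it is in place, the rest is routine local $\varepsilon$-$\delta$ bookkeeping with colored neighborhood statistics.
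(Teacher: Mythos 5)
First, a remark on scope: the paper does not prove this statement itself --- it is quoted from \cite[Theorem 3]{abeweiss} --- so there is no in-paper proof to compare against. Your overall strategy (the easy containment via the factor map $\pi$, the reduction of a Borel coloring of $X\times[0,1]$ to a uniform mixture of finitely many colorings $\varphi_j$ of $X$, and the realization of that mixture on $f$ by gluing the $\varphi_j$ along an almost invariant equipartition supplied by non-strong ergodicity) is the right one and is essentially how the cited result is proved.

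There is, however, a genuine gap in the averaging step. If $\psi=\varphi_j$ on $B_j$ and the $r$-ball of $x\in B_j$ stays inside $B_j$, then the $\psi$-colored ball at $x$ equals the $\varphi_j$-colored ball at $x$; but such $x$ is distributed according to $\mu$ conditioned on $B_j$, so what your construction produces is $\sum_j \mu(B_j)\,P_{f,r}[\varphi_j\mid B_j]$ up to boundary errors, whereas the target $P_{f\times I,r}[\varphi^*]$ equals $\frac{1}{m}\sum_j P_{f,r}[\varphi_j]$. These can differ by a constant no matter how small $\delta$ is: if, say, $\varphi_1$ happens to be the indicator of $B_1$, already the distribution of the root color is wrong. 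What is needed is that each $B_j$ be approximately \emph{independent} of the finite partition of $X$ into $\varphi_j$-ball types, and almost invariance plus $\mu(B_j)=1/m$ does not imply this; the independence has to be built into the construction of $(B_j)$. Fortunately ergodicity provides exactly this: for an ergodic action, any asymptotically invariant sequence $(A_n)$ satisfies $\mu(A_n\cap C)-\mu(A_n)\mu(C)\to 0$ for every fixed measurable $C$ (approximate the constant $\mu(C)$ in $L^2$ by a convex combination of translates of $1_C$, then pair against $1_{A_n}$ using asymptotic invariance for the finitely many group elements involved). The same lemma is what allows one to combine several almost invariant sets into an equipartition into $m$ almost invariant pieces of measure $1/m$ that is almost independent of a prescribed finite algebra; your ultrapower/diffuseness sketch is a reasonable way to package that part, but as written the independence requirement is missing from the list of properties you demand of $(B_j)$, and the claimed bound $d_{TV}\bigl(P_{f,r}[\psi],P_{f\times I,r}[\varphi^*]\bigr)<\varepsilon/2$ does not follow from the properties you do state.
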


\subsection{Dispersive actions}

Let $\mathrm{Sch}(\Gamma, \Gamma_n, S)$ be a sequence of Schreier graphs, and let $f_n$ denote the corresponding finite actions. We call the sequence $\mathrm{Sch}(\Gamma, \Gamma_n, S)$ \emph{dispersive} if for any
subsequential weak containment limit $f$ of $f_{n}$, $f$ has no strongly ergodic, ergodic component of positive measure.

\begin{lemma} \label{lemma:dispersive}
Let $\Gamma$ be a group generated by the finite set $S$ and $\Gamma_n$ a sequence of subgroups such that the corresponding Schreier graphs $\mathrm{Sch}(\Gamma, \Gamma_n, S)$ form a dispersive sequence. Then for every $\varepsilon > 0$ and $k \in \mathbb{N}$ we can find some $n$ such that the vertex set $V$ of $\mathrm{Sch}(\Gamma, \Gamma_n, S)$ can be partitioned into $k$ sets $A_1, \ldots, A_k$ such that
\begin{enumerate}
\item $\frac{1}{k} - \varepsilon \leq \frac{|A_i|}{|V|} \leq \frac{1}{k} + \varepsilon$ for all $A_i$,
\item $\sum |S A_i  \setminus A_i| < \varepsilon |V|.$
\end{enumerate}
\end{lemma}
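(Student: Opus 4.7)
The plan is to argue by contradiction. Suppose the conclusion fails: there exist $\varepsilon_0 > 0$ and $k \in \mathbb{N}$ and (after passing to a subsequence which I still call $(\Gamma_n)$) such that for every $n$, no $k$-partition of $V(\mathrm{Sch}(\Gamma, \Gamma_n, S))$ meets both conditions with $\varepsilon = \varepsilon_0$. By compactness of the weak containment topology on p.m.p.\ actions of $\Gamma$, I would pass to a further subsequence so that the corresponding finite coset actions $f_n$ on $\Gamma/\Gamma_n$ converge in the weak containment topology to some p.m.p.\ action $f$ of $\Gamma$ on $(X, \mu)$. The dispersiveness hypothesis then tells us that $\nu$-almost every ergodic component $f_\alpha$ of $f$ is non-strongly-ergodic.

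The heart of the argument is to build on $f$ a Borel coloring $\psi: X \to \{1, \ldots, k\}$ with $\mu(\psi^{-1}(i)) = 1/k$ for each $i$ and $\sum_{i,\, s \in S} \mu\bigl(\{x : \psi(x) = i,\, \psi(f_s x) \ne i\}\bigr) < \varepsilon_0/2$. I would do this component-wise. By Theorem \ref{theorem:AW}, each non-strongly-ergodic $f_\alpha$ is weakly equivalent to $f_\alpha \times I$, where $I$ is the trivial action on $[0,1]$. On $X_\alpha \times [0,1]$ the deterministic coloring $(x, t) \mapsto \lceil kt \rceil$ has classes of mass exactly $1/k$ and zero edge boundary, because $\Gamma$ fixes the $[0,1]$-coordinate. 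Weak equivalence then allows one to realize an approximation of this colored-neighborhood statistic, up to arbitrarily small error, by a Borel coloring of $X_\alpha$ itself. Assembling these component-wise colorings measurably in $\alpha$ yields the global coloring $\psi$.

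To finish, I would use the convergence $f_n \to f$ in the opposite direction. Since $Q_{f_n, 1}^k \to Q_{f, 1}^k$ in Hausdorff distance with respect to the $S$-labeled neighborhoods, for every $\delta > 0$ and every sufficiently large $n$ one can find a coloring $\varphi_n$ of $V(\mathrm{Sch}(\Gamma, \Gamma_n, S))$ whose $1$-neighborhood statistic lies within $\delta$ in total variation of that of $\psi$. The parts $A_i := \varphi_n^{-1}(i)$ then have sizes within $\delta|V|$ of $|V|/k$, and the total $S$-boundary differs by at most $\delta |S| \cdot |V|$ from $\varepsilon_0|V|/2$. Choosing $\delta$ small enough produces a partition satisfying both conditions with $\varepsilon_0$, contradicting the standing assumption on the sequence.

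The main obstacle I foresee is the measurable assembly step: producing the single Borel coloring $\psi$ from the component-wise weak equivalences $f_\alpha \sim f_\alpha \times I$. This forces one to choose the approximating colorings on the $X_\alpha$ in a Borel manner in $\alpha$, which calls for a measurable selection theorem (for example Kuratowski--Ryll-Nardzewski) applied to the Polish space of colored neighborhood statistics. An alternative is to upgrade Theorem \ref{theorem:AW} directly to the statement that $f \sim f \times I$ whenever $\nu$-a.e.\ ergodic component is non-strongly-ergodic, which requires essentially the same measurability work.
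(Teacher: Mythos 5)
Your overall architecture --- pass to a weak containment limit $f$ of the coset actions, use Theorem \ref{theorem:AW} to manufacture an almost invariant $k$-partition of $(X,\mu)$ into pieces of measure close to $1/k$, and then transfer that partition back to a finite Schreier graph by convergence --- matches the paper, and your final transfer step is correct as stated. The gap is the sentence ``the dispersiveness hypothesis then tells us that $\nu$-almost every ergodic component $f_\alpha$ of $f$ is non-strongly-ergodic.'' Dispersiveness, as defined in the paper, only forbids strongly ergodic ergodic components of \emph{positive measure}, i.e.\ atoms of the ergodic decomposition. It says nothing about the non-atomic part, where every component has measure zero: for example, for a group with property (T) every ergodic p.m.p.\ action is strongly ergodic, so a non-ergodic action with purely non-atomic ergodic decomposition has \emph{all} of its components strongly ergodic while still having no strongly ergodic component of positive measure. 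On such components Theorem \ref{theorem:AW} is unavailable and your component-wise coloring cannot be built, so the argument as written breaks down exactly on the part of the space where you also need measurable selection.

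The repair --- and the route the paper takes --- is to stop working component-by-component. On the non-atomic part of the ergodic decomposition you do not need Theorem \ref{theorem:AW} at all: pulling back a partition of the (non-atomic) space of ergodic components gives genuinely invariant sets of any prescribed measures, hence with zero boundary. The paper implements this by iteratively splitting non-ergodic invariant sets into smaller invariant sets, stopping either at a positive-measure ergodic component (where dispersiveness does give non-strong ergodicity, and Theorem \ref{theorem:AW} plus weak equivalence with $f \times I$ yields an almost invariant $k$-partition of that component, exactly as in your ergodic case) or at invariant pieces of measure at most $\varepsilon/100$, and then regrouping the finitely many resulting blocks into $k$ sets of measure in $[1/k - \varepsilon/2, 1/k + \varepsilon/2]$. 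This simultaneously removes the need for any measurable selection over an uncountable family of components, which is the obstacle you correctly flag but do not resolve. The proof-by-contradiction wrapper is harmless but unnecessary; the direct argument suffices.
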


\begin{proof}
Pick a subsequence $\Gamma_{n_l}$ such that the $\mathrm{Sch}(\Gamma, \Gamma_{n_l}, S)$ converge in the weak containment topology to some $\Gamma$ action $f$ on a standard Borel space $(X,\mu)$. As the sequence is dispersive we know that $f$ has no strongly egrodic, ergodic components of positive measure. 

We claim that $X$ can be partitioned into $k$ disjoined Borel sets $B_1, \ldots, B_k$ of approximately equal measure that are almost invariant, namely 
\[\sum_{s \in S} \sum_{1 \leq i \leq k} \mu(s A_i \setminus A_i) < \varepsilon/2.\]

Assume first that $f$ is ergodic. Then it is not strongly ergodic, and we can use Theorem \ref{theorem:AW}. The base space of $f \times I$ can be easily partitioned into $k$ invariant subsets, specifically $B'_i= (X,\mu) \times [(i-1)/k, i/k]$. Weak equivalence guarantees that this partition can be modeled with arbitrarily small error (e.g. $\varepsilon/2$) for $f$ on the set X, and thus we have the desired $B_i$.

When $f$ is not ergodic the we can divide $X$ into two invariant sets $X_1$ and $X_2$ of positive measure. If $f$ is ergodic on one of the $X_i$, then it is not strongly ergodic on that part and hence we can use the above argument to partion that $X_i$. If $f$ is not ergodic on $X_i$, then we can again divide it into invariant subsets of positive measure. 

Since all positive measure ergodic components are not strongly ergodic, we can continue this procedure and find a partition into some tiny (measure $\varepsilon/100)$ invariant sets and some non-strongy-ergodic components, which we can partition into almost invariant sets. Putting these blocks together into $k$ sets we can get an almost invariant partition into pieces with measure in $[1/k - \varepsilon/2, 1/k + \varepsilon/2]$. 

Now since $f$ is a limit, for some $n_l$ large enough we can model the partition $B_1, \ldots, B_k$ with error $\varepsilon/2$ on $\mathrm{Sch}(\Gamma, \Gamma_{n_l}, S)$, and get the desired $A_i$.
\end{proof}

\subsection{Finitely presented groups}

We now briefly discuss how we will present finite index subgroups of finitely presented groups. Notation and general framework follows \cite{abertjaikin}.

Let $\Gamma =  \langle S \mid R \rangle$ be a finitely presented group, and $H \subseteq \Gamma$ a finite index subgroup. Let $\mathcal{T}$ denote a spanning tree of the Schreier graph $\mathrm{Sch}(\Gamma, H, S)$. We select a transversal $T$ for the subgroup $H$ as follows: for each coset $\gamma H$ we consider the unique path in $\mathrm{Sch}(\Gamma, H, S)$ from the root $H$ to $\gamma H$, and select the corresponding $S$-word to be in $T$. This $T$ is called the left \emph{Schreier transversal} corresponding to $\mathcal{T}$ with respect to $S$. For a group element $\gamma \in \Gamma$ let $\tilde{\gamma}$ denote the unique element in $T$ such that $\gamma H= \tilde{\gamma} H$.

For every edge $e=(\gamma H, s \gamma H)$ of $\mathrm{Sch}(\Gamma, H, S)$ we put $T(e)=(\tilde{s\gamma})^{-1}s \tilde{\gamma}$. It is known that the $\{T(e)\}$ belong to and generate $H$. Note that if $e \in E(\mathcal{T})$, then $T(e)=1$.

For a relation $r=s_l \ldots s_1 \in R$ and group element $t \in T$ let $r_t= t^{-1} r t$. This $r_t$ is an element of $H$, and can be considered as a word in the $T(e)$: $r_t= T(e_l) \dots T(e_1)$, where $e_i = (s_{i-1} \ldots s_1 t H, s_{i} \ldots s_1 t H)$. We are going to use the fact that these relations give a presentation of H: \[ H = \big\langle \{T(e)\}_{e \in E(\mathrm{Sch}(\Gamma, H,S)) \setminus E(\mathcal{T})} \mid \{r_t\}_{r \in R, t \in T}\big\rangle. \]

Suppose a group $H$ has subgroups $H_i \subseteq H$ ($1 \leq i \leq k$) which all contain a fixed subgroup $L$ and $H \cong *_{L} H_i$. We say that this decomposition is non-trivial, if $L$ has index at least 3 in at least two of the subgroups.

\bigskip

\begin{proof}[Proof of Theorem \ref{lack}] 
Assume that $\mathrm{Sch}(\Gamma, \Gamma_n, S)$ is dispersive and $\mathrm{RG}(\Gamma, (\Gamma_n)) > 0$. We will show that some $\Gamma_n$ decomposes as a non-trivial amalgamated product. We can pass to a subsequence and assume that $\frac{d(\Gamma_n)-1}{[\Gamma:\Gamma_n]} > c >0$ for all $n$. Choose an integer $k$ such that 

\[\left( \frac{3}{2}|S| + 1 \right)\frac{1}{k} \leq c/2.\]

Let $M$ be the sum of the lengths of the relations in $R$. As the sequence is dispersive, using Lemma \ref{lemma:dispersive} we can choose some $n$ such that the vertex set $V\big(\mathrm{Sch}(\Gamma, \Gamma_n, S)\big)$ can be split into the disjoint union of $k$ sets $A_1, \ldots, A_k$ such that 

\begin{enumerate}
\item $\frac{[\Gamma: \Gamma_n]}{k}-\frac{[\Gamma: \Gamma_n]}{2k} < |A_j| < \frac{[\Gamma: \Gamma_n]}{k}+\frac{[\Gamma: \Gamma_n]}{2k}$ for all $j \in \{1, \ldots, k\}$ and
\item $|\partial (A_1, \ldots, A_k)| < \frac{1}{k(1+M^2)} [\Gamma: \Gamma_n]$, where 
\end{enumerate}
\[\partial (A_1, \ldots, A_k)= \big\{e \in E\big(\mathrm{Sch}(\Gamma, \Gamma_n, S)\big) \ \big| \  e=(x,y), x \in A_j, y \in A_l, j \neq l\big\}.\]

As we have $[\Gamma:\Gamma_n] \to \infty$ we can also make sure that we choose $n$ large enough so that $\frac{k-1}{[\Gamma:\Gamma_n]} \leq c/2$. Put $H=\Gamma_n$ and follow the above construction for a presentation of $H$. Note that $\big|V\big(\mathrm{Sch}(\Gamma, H, S)\big)\big|=[\Gamma : H]$. 

Define $Y$ to be the collection of generators $T(e)$ that share a relation with an inbetween edge. More precisely let $T(e) \in Y$ if either $e \in \partial(A_1, \ldots, A_k)$ or there exists a relation $r_t=T(e_1)^{\pm 1}\dots T(e_l)^{\pm 1}$ for which some $e_j \in \partial(A_1, \ldots, A_k)$ and some $e_m = e$. Let $X_i$ be the set of generators $T(e)$ for which both endpoints of $e$ are in $A_i$. 

Define the subgroups $L = \langle Y \rangle$ and $H_i = \langle Y \cup X_i \rangle$. Clearly $L \leq H_i$ for all $i$. 

\begin{lemma} \label{lemma:amalgamated}
$H$ decomposes as the amalgamated product of the $H_j$ over $L$,  $H \cong *_{L} H_i$.
\end{lemma}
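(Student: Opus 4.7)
My plan is to verify the amalgamation by matching the Schreier presentation of $H$ recalled in the paper with the natural presentation of $*_L H_i$. Recall that $H$ has the presentation
\[ H = \bigl\langle\, \{T(e) : e \notin E(\mathcal{T})\} \;\bigm|\; \{r_t : r \in R,\, t \in T\} \,\bigr\rangle, \]
where each defining word $r_t = T(e_l)^{\pm 1}\cdots T(e_1)^{\pm 1}$ records a closed walk $e_1,\ldots,e_l$ in $\mathrm{Sch}(\Gamma, H, S)$ based at $t$, with tree edges contributing trivial factors.

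The key step is to partition $\{r_t\}$ into classes according to whether the corresponding walk touches a boundary edge. Set $P_L = \{r_t : \text{some } e_j \in \partial(A_1,\ldots,A_k)\}$ and $P_i = \{r_t : \text{the walk lies inside } A_i\}$. Since a walk that starts and ends at the same coset and never uses a $\partial$-edge is confined to a single $A_i$, this is genuinely a partition. For any $r_t \in P_L$ and any non-tree edge $e_m$ of its walk, $e_m$ co-appears with a $\partial$-edge $e_j$ in this very relation, so $T(e_m) \in Y$ by the definition of $Y$; thus each relation in $P_L$ is a word in $Y$ alone, i.e.\ a relation in $L = \langle Y \rangle$. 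For $r_t \in P_i$, every non-tree edge of the walk has both endpoints in $A_i$, so its generator lies in $X_i \subseteq H_i$, and the whole relation is a relation in $H_i$.

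Now form the amalgamated product $G = *_L H_i$ of these genuine subgroups of $H$. Since $L \hookrightarrow H_i$ injectively, the pushout is defined, and the inclusions $H_i \hookrightarrow H$ (which agree on $L$) induce a surjective homomorphism $\pi: G \to H$ by the universal property. To produce an inverse $\phi: H \to G$ I would use the Schreier presentation: send each generator $T(e) \in X_i$ to its image under $H_i \hookrightarrow G$ and each generator $T(e) \in Y$ to its image under $L \hookrightarrow G$. These rules agree on $X_i \cap Y$ because the canonical embeddings $L \hookrightarrow H_i \hookrightarrow G$ all restrict to the common embedding $L \hookrightarrow G$. The previous paragraph shows that the defining relations in $P_L$ already hold in $L \subseteq G$ and those in $P_i$ already hold in $H_i \subseteq G$, so $\phi$ is a well-defined homomorphism. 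Since both $\pi \circ \phi$ and $\phi \circ \pi$ act as identity on generators, they are the identity, and $H \cong *_L H_i$ follows.

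The principal obstacle is the combinatorial check in the middle step: one must confirm that the definition of $Y$ correctly absorbs \emph{every} non-tree edge that co-appears with a $\partial$-edge in a single relation $r_t$, including the subtle case where the walk uses only \emph{tree} $\partial$-edges to move between different $A_i$'s (its non-tree edges are still forced into $Y$). Once this bookkeeping is settled, the amalgamation falls out formally from the universal property of the pushout.
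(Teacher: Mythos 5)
Your proof is correct and follows essentially the same route as the paper's: the key step in both is that every Schreier relation $r_t$ either touches a boundary edge (and is then a word in $Y$ alone, since the definition of $Y$ absorbs every edge co-appearing with a $\partial$-edge, tree or not) or is confined to a single $A_i$ (and is then a word in $X_i$), after which the isomorphism is a formal consequence of the universal property. The only cosmetic difference is that you construct the two mutually inverse homomorphisms directly between $H$ and $*_L H_i$, whereas the paper first realizes $H$ as a pushout of the abstractly presented groups $T_i = \langle X_i \cup Y \mid R_i \rangle$ over $\bar{T} = \langle Y \mid \bar{R} \rangle$ and then descends to the genuine subgroups $H_i$ and $L$.
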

We postpone the proof of the lemma, and show that this decomposition is non-trivial. Suppose that $L$ has index at most 3 in $H_1, H_2, \ldots, H_{k-1}$. Then each of these $H_i$ ($1 \leq i \leq k-1)$ are generated by $L$ and at most 1 other element. Thus $H$ is generated by $H_k$ (which includes $L$) and at most $k-1$ other elements, $d(H) \leq d(H_k) + k-1$.

It is easy to bound the cardinality of $|X_k|$:

\[|X_k| \leq |S||A_k| \leq |S|\left(\frac{[\Gamma : H]}{k}+\frac{[\Gamma : H]}{2k}\right).\]

Let $r=s_l \ldots s_1$ be a relation of $\Gamma$ of lenght $l$. Note that there are at most $l |\partial(A_1, \ldots, A_k)|$ different lifts $r_{\tilde{g}}=T(e_1)^{\pm 1} \dots T(e_l)^{\pm 1}$ of $r$ for which some $e_j \in \partial(A_1, \ldots, A_k)$. Also for each such relation of $H$ we have at most $l$ generators $T(e)$ of $H$ that are getting into $Y$. Thus, if $\{l_j\}$ is the set of lenghts of the relations of $\Gamma$ (so $M= \sum l_j$), then we can bound the cardinality of $Y$.  

\[|Y| \leq |\partial(A_1, \ldots, A_k)| (1 + \sum l_j^2) \leq |\partial(A_1, \ldots, A_k)| (1 + M^2) \leq \frac{[\Gamma : H]}{k}.\]
 Putting our bounds together we get 

\[d(H) \leq d(H_k) + k-1 \leq |X_k| + |Y| + k-1 \leq \left( \frac{3}{2}|S| + 1 \right) \frac{[\Gamma : H]}{k} +k-1.\]
This however gives an upper bound on the rank quotient at $H$:

\[\frac{d(H)-1}{[\Gamma: H]} \leq \left( \frac{3}{2}|S| + 1 \right)\frac{1}{k} + \frac{k-1}{[\Gamma:H]} \leq c/2 + c/2 =c.\]
This contradicts our assumption that each such quotient is more than $c$, hence the decomposition is non-trivial.
\end{proof}
\bigskip

\begin{proof}[Proof of Lemma \ref{lemma:amalgamated}]
The argument follows the one in \cite[Section 3]{abertjaikin}.

Consider the following sets of relations. Let $R_i$ be the set of all the $r_t=T(e_1)^{\pm 1}\dots T(e_l)^{\pm 1}$ where either all $e_j$ have both endpoints in $A_i$ or some $e_j$ is in $\partial(A_1, \ldots, A_k)$. Now $R_i \cup R_j$ is the same set $\bar{R}$ for all pairs $(i,j)$, that is the relations having an inbetween edge. Define the groups $T_i$ by the presentations $\langle X_i \cup Y \mid R_i \rangle$, let $\bar{T} = \langle  Y \mid \bar{R} \rangle$. We have a homomorphisms $\phi_j:\bar{T} \to T_i$ by the inclusion of $Y$ into $X_i \cup Y$. From the presentations we see that $H \cong *_{\bar{T}} T_i$.

Each $T_i$ surjects onto $H_i$ (and $\bar{T}$ surjects onto $L$) by mapping the abstract generators to their counterparts in $H$. By the universal property of the amalgamated product one can see $H \cong *_{L} H_i$.
\end{proof}

\begin{proposition}
For a countable amenable group $\Gamma$ all sequences $(\Gamma_n)$ of distinct finite index subgroups are dispersive.
\end{proposition}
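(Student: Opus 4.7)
The plan is to prove that every weak-containment subsequential limit $f$ of the coset actions $f_n$ on a probability space $(X,\mu)$ has the property that every positive-measure ergodic component of $f$ fails to be strongly ergodic. Since the $\Gamma_n$ are distinct and (in the applications, in particular for Theorem \ref{fpamen}) $\Gamma$ is finitely generated, only finitely many subgroups realize each finite index; hence after passing to a subsequence we may assume $[\Gamma:\Gamma_n] \to \infty$.

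First I would show that $\mu$-almost every $\Gamma$-orbit under $f$ is infinite. Weak-containment convergence of the $f_n$ specializes to Benjamini--Schramm convergence of the labeled Schreier graphs $\mathrm{Sch}(\Gamma,\Gamma_n,S)$, and Benjamini--Schramm convergence determines the law of the rooted connected component of a uniform random vertex up to any prescribed radius. Each $\mathrm{Sch}(\Gamma,\Gamma_n,S)$ is connected of size $[\Gamma:\Gamma_n]$, so the probability that a uniform random vertex lies in a component of size at most $N$ is $0$ once $[\Gamma:\Gamma_n] > N$. The event ``the component of the root has size at most $N$'' is detectable from the rooted $(N+1)$-neighborhood statistics (as an $(N+1)$-ball equal to its own $N$-subball), so this probability passes to the limit; letting $N \to \infty$ shows orbits of $f$ are a.s.\ infinite, and hence every positive-measure ergodic component of $f$ is non-atomic.

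For the concluding step I would invoke the classical fact that a non-atomic ergodic p.m.p.\ action of an infinite amenable group is never strongly ergodic. One route is to apply the Ornstein--Weiss Rokhlin lemma: tile $X$, up to arbitrarily small error, by translates of a Følner set $F \subseteq \Gamma$, and then construct almost invariant subsets of any prescribed measure in $(0,1)$ by selecting a fixed fraction of each tile. Alternatively, by Connes--Feldman--Weiss the orbit equivalence relation is hyperfinite, and almost invariant sets are produced by halving the finite subrelations in an exhaustion. Applied to each positive-measure ergodic component of $f$, which is a non-atomic action of the amenable group $\Gamma$, this rules out strong ergodicity, completing the proof.

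The main obstacle is the orbit-infiniteness step, where orbit-size information must be extracted from the weak-containment limit; once this is in place via the Benjamini--Schramm reading of the neighborhood statistics and the fact that each $\mathrm{Sch}(\Gamma,\Gamma_n,S)$ consists of a single orbit of diverging size, the classical theory of amenable actions closes the argument.
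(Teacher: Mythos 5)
Your proof is correct and follows essentially the same route as the paper, whose entire argument is to cite Schmidt's theorem that amenable groups admit no strongly ergodic (ergodic, non-atomic) p.m.p.\ actions and to restrict the limit action to a positive-measure ergodic component. The extra steps you supply --- deducing $[\Gamma:\Gamma_n]\to\infty$ from distinctness plus finite generation, and reading off from the Benjamini--Schramm statistics that orbits of the limit are almost surely infinite, so that positive-measure ergodic components are non-atomic --- make explicit a non-atomicity hypothesis that the paper's one-line appeal to Schmidt leaves implicit (finite transitive actions \emph{are} strongly ergodic), and your Ornstein--Weiss/Connes--Feldman--Weiss sketch is just a standard proof of the fact the paper cites.
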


\begin{proof}
It is a result of Schmidt \cite[Theorem 2.4]{Schmidt} that amenable groups admit no strongly ergodic actions. If any subsequential limit would have an ergodic component of positive measure that is strongly ergodic, then restricting the action to that component would contradict \cite{Schmidt}. 
\end{proof}

As amenable groups cannot decompose as non-trivial amalgamated products this proves Theorem \ref{fpamen}.

\section{Open problems \label{opensection}}

Note that an even stronger form of Theorem \ref{rgrg} would simply express the
rank gradient of a local-global convergent sequence as the cost of the
limiting graphing of the sequence. As of this moment, we do not know how to
prove or disprove this. The obstacle is that combinatorial cost handles
sequences of generating sets with a bounded complexity with respect to some
standard generating set. A priori, it could happen that actual small
generating sets over the sequence need a very aggressive growth of complexity,
and the local-global metric is too weak to connect such generating sets over
the sequence. This is the same obstacle that makes the proof of Theorem
\ref{lack} somewhat tricky.

The following problem connects two well-known unsolved problems, one in
ergodic theory, the other in $3$-manifold theory.

\begin{problem}
\label{fixrg}Let $\Gamma$ be a finitely generated group. Does there exist $c$
such that for any Farber sequence $(\Gamma_{n})$ in $\Gamma$, we have
$\mathrm{RG}(\Gamma,(\Gamma_{n}))=c$?
\end{problem}

Equivalently, one can ask whether $\mathrm{RG}(\Gamma,(\Gamma_{n}))$ exists
for any Farber sequence $(\Gamma_{n})$ in $\Gamma$. Indeed, an advantage of
Farber sequences over Farber chains is that they are closed to merging.

By Theorem \ref{rgrg}, a negative answer to Problem \ref{fixrg} would
immediately give a negative answer to the Fixed Price problem of Gaboriau
\cite{gabor}, that asks whether for an arbitrary countable group $\Gamma$, all
essentially free p.m.p.\ actions of $\Gamma$ have the same cost. A positive
answer, on the other hand, would specifically show that in a finitely
generated group, any two normal chains with trivial intersection have the same
rank gradient, which by \cite{miknik} would then solve the strong Rank vs
Heegaard genus problem on hyperbolic $3$-manifolds.

One possible approach to Problem \ref{fixrg} is through graph theory as
follows. Let $G$ be a finite, connected graph with maximal degree $D$. For
$L\geq1$ let
\[
c_{L}(G)=\min_{H}\frac{\left\vert E(H)\right\vert }{\left\vert V(G)\right\vert
}%
\]
where $H$ runs through all rewirings of $G$ with bi-Lipschitz constant at most
$L$. It is easy to see that
\[
1-o(1)\leq c_{L}(G)\leq D\text{.}%
\]

The following problem is related to the Fixed Price problem of Gaboriau.

\begin{problem}
\label{problema}Let $(G_{n})$ be a Benjamini-Schramm convergent sequence of
graphs of bounded degree. Does $c_{L}(G_{n})$ converge for every $L\geq1$?
\end{problem}

The connection is one sided.

\begin{proposition}
\label{kapcsolat}An affirmative solution of Problem \ref{problema} implies an
affirmative solution of Problem \ref{fixrg}.
\end{proposition}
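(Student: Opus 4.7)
The plan is to combine the inequality
\[
r(\Gamma,H) \leq c_L(\mathrm{Sch}(\Gamma,H,S)) - 1,
\]
valid for every $L \geq 1$ and every finite-index $H \leq \Gamma$, with Problem \ref{problema} to bound $\limsup_n r(\Gamma,\Gamma_n)$, and then to match the bound via a diagonal/ultraproduct argument in the spirit of the first half of the proof of Theorem \ref{rgrg}. To prove the inequality, take a rewiring $T$ of $\mathrm{Sch}(\Gamma,H,S)$ with bi-Lipschitz distortion at most $L$. Each edge of $T$ is realized by a path of length $\leq L$ in $\mathrm{Sch}$, and conversely each edge of $\mathrm{Sch}$ is a path of length $\leq L$ in $T$, so the induced map $\pi_1(T)\twoheadrightarrow \pi_1(\mathrm{Sch})\twoheadrightarrow H$ is surjective. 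Since $\pi_1(T)$ is free of rank $|E(T)|-|V(T)|+1$, this yields $d(H)\leq |E(T)|-|V(T)|+1$, and minimizing over $T$ delivers the claim.

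\medskip

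Let $(\Gamma_n)$ be a Farber sequence in $\Gamma$; its Schreier graphs BS-converge to $\mathrm{Cay}(\Gamma,S)$. Under Problem \ref{problema}, $c_L(\mathrm{Sch}(\Gamma,\Gamma_n,S)) \to \ell_L$ for every $L$, and by merging any two Farber sequences in $\Gamma$ into a single one (whose $c_L$ must also converge), $\ell_L$ depends only on $(\Gamma,S)$. Combining,
\[
\limsup_n r(\Gamma,\Gamma_n) \leq \inf_L \ell_L - 1 =: c-1
\]
uniformly over Farber sequences. By Theorem \ref{fotetel}, the constant $c$ also coincides with the cost of every local-global subsequential limit of $(\mathrm{Sch}(\Gamma,\Gamma_n,S))$, each such limit being an essentially free graphing of a $\Gamma$-action.

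\medskip

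For the matching lower bound I would follow the first half of the proof of Theorem \ref{rgrg}: choose a subsequence $(n_k)$ with $r(\Gamma,\Gamma_{n_k}) \to \liminf_n r_n$, let $f = \prod_k f_{n_k}$ be the diagonal product of the coset actions (essentially free by Farberness), and use the factor maps $f \twoheadrightarrow f_{n_k}$ together with the weak-containment monotonicity of groupoid cost to conclude $\mathrm{gcost}(f) \leq \liminf_n r_n + 1$. To close the argument one needs $\mathrm{gcost}(f) \geq c$, which I would extract from Carderi's ultraproduct theorem \cite{carderi}: the Loeb ultraproduct $f_\omega$ along an ultrafilter $\omega$ is weakly equivalent to the weak containment limit of $(f_{n_k})$ along $\omega$ and hence has cost $c$ by the previous paragraph, while cost monotonicity gives $\mathrm{gcost}(f_\omega) \leq \mathrm{ulim}_\omega (r_{n_k}+1)$. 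Choosing $\omega$ to target any accumulation point of $(r_n)$ then forces every such point to equal $c-1$, so $\mathrm{RG}(\Gamma,(\Gamma_n)) = c-1$ exists.

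\medskip

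The main obstacle is anchoring $\mathrm{gcost}(f_\omega) = c$ without invoking fixed price. Problem \ref{problema}, combined with Theorem \ref{fotetel} and Carderi's weak-equivalence theorem for ultraproducts, forces the cost of any such ultraproduct to equal the universal BS-invariant $c$, so it plays precisely the role that fixed price plays in the proof of Theorem \ref{rgrg}.
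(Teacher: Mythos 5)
The paper states this proposition without proof, so I can only judge your argument on its own terms; it has a genuine gap in each half.

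Your opening inequality $r(\Gamma,H)\leq c_L(\mathrm{Sch}(\Gamma,H,S))-1$ for \emph{every} $L$ is false, and the step that fails is exactly the claimed surjectivity of $\pi_1(T)\to\pi_1(\mathrm{Sch})\to H$. Bi-Lipschitz equivalence of the two graph metrics gives paths in both directions, but replacing each edge of a loop in $\mathrm{Sch}$ by a $T$-path and then mapping back does not produce a homotopic loop, so no surjection on $\pi_1$ is induced. Concretely, once $L\geq\mathrm{diam}(G)$ a spanning tree $T$ is a legitimate $L$-rewiring, so $c_L(G)-1\leq -1/|V(G)|<0\leq r(\Gamma,H)$ for any nontrivial $H$, while $\pi_1(T)=1$ certainly does not surject onto $H$; already $\Gamma=\mathbb{Z}$, $H=n\mathbb{Z}$, with the cycle and the Hamiltonian path, is a counterexample. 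The correct statement is the asymptotic one, $\lim r(\Gamma,\Gamma_{n_l})\leq\mathrm{cc}\big(\mathrm{Sch}(\Gamma,\Gamma_{n_l},S)\big)-1$, which is \cite[Theorem 8]{torsion}, needs a single $L$ fixed along the whole sequence, and requires a genuinely harder argument; citing it would repair your upper bound $\limsup_n r(\Gamma,\Gamma_n)\leq\inf_L\ell_L-1$.

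The lower bound is where the real content of the proposition lies, and there your argument asserts the one inequality that is not available: $\mathrm{gcost}(f_\omega)\leq\mathrm{ulim}_\omega(r_{n_k}+1)$, i.e.\ that the ultraproduct (equivalently, weak-containment limit) is at most as expensive as the approximating finite actions. The semicontinuity the paper actually proves (Proposition \ref{prop:gcostsemicontinuity}) goes the opposite way, $\limsup_n\mathrm{gcost}(f_n)\leq\mathrm{gcost}(\lim f_n)$: one can push a cheap bi-Lipschitz generating set of the limit down to the $f_n$, but pulling the optimal generating sets $A_n$ of the $M_{f_n}$ up to the limit is obstructed precisely because their bi-Lipschitz constants $L_n$ may blow up, so the ultraproduct of the $A_n$ need not generate. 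This is exactly the ``aggressive growth of complexity'' obstacle described at the start of Section \ref{opensection}, and Problem \ref{problema}, which fixes $L$, does not hand you this inequality. The diagonal product does not save you either: $f\succeq f_\omega$, so Ab\'ert--Weiss monotonicity gives $\mathrm{gcost}(f)\leq\mathrm{gcost}(f_\omega)=c$, again the wrong direction for the desired $\mathrm{gcost}(f)\geq c$. Every inequality you can legitimately derive points the same way, so the argument never closes; a proof of the proposition must extract $\liminf_n r(\Gamma,\Gamma_n)\geq\inf_L\ell_L-1$ from Problem \ref{problema} by some mechanism not present in your sketch.
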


Note, however, that this problem seems to be a real strenghtening. Indeed, it
could happen that for two large graphs $G_{1}$ and $G_{2}$ that are very close
in the Benjamini-Schramm topology, one can find a cheap rewiring of $G_{1}$ with a
bi-Lipschitz constant $L_{1}$ but only do the same to $G_{2}$ with a much
bigger constant.

\bigskip

\begin{problem}
Let $\Gamma$ be a finitely presented group generated by a finite symmetric set
$S$. Let $(\Gamma_{n})$ be a sequence of subgroups of finite index in $\Gamma$
and let $G_{n}=\mathrm{Sch}(\Gamma,\Gamma_{n},S)$. Assume that no $\Gamma_{n}$
decomposes as a non-trivial amalgamated product and that the sequence $(G_{n})$ is dispersive. Is it true that $\mathrm{cc}%
(G_{n})=1$?
\end{problem}

In Theorem \ref{lack} we show that the rank gradient of $(\Gamma_{n})$ must vanish.

\bigskip


\begin{thebibliography}{999999}                                                                                           



\bibitem[AbE1]{abelekprof}\textsc{M. Ab\'{e}rt and G. Elek,} Dynamical
properties of profinite actions, Ergodic Theory and Dynamical Systems 32 (2012), issue 6, 1805--1835

\bibitem[AbE2]{abelekpart}\textsc{M. Ab\'{e}rt and G. Elek,} The Space of
Actions, Partition Metric and Combinatorial Rigidity, preprint

\bibitem[AbN]{miknik}\textsc{M. Ab\'{e}rt and N. Nikolov,} Rank gradient, cost
of groups and the rank vs Heegaard genus conjecture, J. Eur. Math. Soc. 14 (2012), 1657--1677.

\bibitem[AJN]{abertjaikin}\textsc{M. Ab\'{e}rt, N. Nikolov and A.
Jaikin-Zapirain,} The rank gradient from a combinatorial viewpoint, Groups
Geom. Dyn. 5 (2011), no. 2, 213--230.

\bibitem[AGN]{torsion}\textsc{M. Ab\'{e}rt, T. Gelander and N. Nikolov,} Rank,
combinatorial cost and homology torsion growth in higher rank lattices, Duke
Math. J. 166 (2017), no. 15, 2925--2964.

\bibitem[AbW]{abeweiss}\textsc{M. Ab\'{e}rt and B. Weiss}, Bernoulli actions
are weakly contained in any free action, Ergodic theory and dynamical systems
33 (2013), issue 2, 323--333.

\bibitem[BoR]{bollriordan}\textsc{B. Bollob\'{a}s and O. Riordan}, Sparse
graphs: metrics and random models. Random Structures and Algorithms 39 (2010), no. 1, 1--38.

\bibitem[Car]{carderi}\textsc{A. Carderi}, Ultraproducts, weak equivalence and sofic entropy, preprint

\bibitem[CGS]{CGS}\textsc{A. Carderi, D. Gaboriau and M. de la Salle}, Non-standard limits of graphs and their orbit equivalence invariants

\bibitem[El]{elek}\textsc{G. Elek}, The combinatorial cost, Enseign. Math. (2)
53 (2007), no. 3-4, 225--235.

\bibitem[Gab]{gabor}\textsc{D. Gaboriau}, Co\^{u}t des relations
d'\'{e}quivalence et des groupes. (French) [Cost of equivalence relations and
of groups] Invent. Math. 139 (2000), no. 1, 41--98.

\bibitem[HLSz]{halosze}\textsc{H. Hatami, L. Lov\'{a}sz and B. Szegedy},
Limits of locally-globally convergent graph sequences, Geom. Funct. Anal. 24 (2014),
no. 1, 269--296.

\bibitem[Kec]{kechris}\textsc{A. Kechris}, Global aspects of ergodic group
actions. Mathematical Surveys and Monographs, 160. American Mathematical
Society, 2010.

\bibitem[Lac]{lack}\textsc{M. Lackenby}, Expanders, rank and graphs of
groups, Israel J. Math. 146 (2005), 357--370.

\bibitem[OrW]{ornweiss}\textsc{D. S. Ornstein and B. Weiss}, Ergodic theory of
amenable group actions. I. The Rohlin lemma, Bull. Amer. Math. Soc. (N.S.) 2
(1980), no. 1, 161--164.

\bibitem[Sch]{Schmidt}\textsc{K. Schmidt}, Amenability, Kazhdan's property T, strong ergodicity and invariant means for ergodic group-actions, Ergodic Theory Dynam. Systems 1 (1981), no. 2, 223--236.  

\end{thebibliography}
\end{document}